\newcommand{\tsk}[1]{\textcolor{YellowOrange}}
\def\@endtheorem{\endtrivlist}
\newtheorem{teo}{Theorem}[section]
\newtheorem{defin}[teo]{Definition}
\newtheorem{prop}[teo]{Proposition}
\newtheorem{cor}[teo]{Corollary}
\newtheorem{lemma}[teo]{Lemma}
\theoremstyle{definition}
\newtheorem{remark}[teo]{Remark}
\newtheoremstyle{dico}
 {\baselineskip}   
  {\topsep}   
  {}  
  {0pt}       
  {} 
  {.}         
  {5pt plus 1pt minus 1pt} 
  {}          
\theoremstyle{dico}
\newtheorem{say}[teo]{}
\numberwithin{equation}{section}
\newcommand{\ra}{\rightarrow}
\newcommand{\C}{\mathbb{C}}
\newcommand{\R}{\mathbb{R}}
\newcommand{\Zeta}{{\mathbb{Z}}}
\newcommand{\QQ}{{\mathbb{Q}}}
\newcommand{\alfa}{\alpha}
\newcommand{\vacuo}{\emptyset}
\newcommand{\La}{\Lambda}
\newcommand{\restr}[1]          {\vert_{#1}}
\newcommand{\Aut}{\operatorname{Aut}}
\newcommand{\End}{\operatorname{End}}
\newcommand{\spur}{\operatorname{Tr}}
\renewcommand{\setminus}{-}
\renewcommand{\phi}{\varphi}
\newcommand{\lds}{\ldots}
\newcommand{\cd}{\cdot}
\newcommand{\sx}{\langle}
\newcommand{\xs}{\rangle}
\newcommand{\ga}{\gamma}
\newcommand{\Ga}{\Gamma}
\newcommand{\gr}{\mathsf{g}}
\newcommand{\Fix}{\mathsf{Fix}}
\newcommand{\GL}{\operatorname{GL}}
\newcommand{\PP}{\mathbb{P}}
\renewcommand{\phi}             {\varphi}
\newcommand{\HH}{\mathfrak{H}}
\newcommand{\sieg}{\HH_g}
\newcommand{\Sp}                {\operatorname {Sp}}
\newcommand{\mm}{{\mathbf{m}}}
\newcommand{\jac}{\mathsf{T}^0_g}
\newcommand{\tor}{\mathsf{T}_g}
\newcommand{\ag}{\mathsf{A}_g}
\newcommand{\mg}{\mathsf{M}_g}
\newcommand{\zg}{\mathsf{Z}}
\newcommand{\datum}{{(\mm, G, \theta)}}
\newcommand{\Diff}{\operatorname{Diff}}
\newcommand{\Map}{\operatorname{Map}}
\newcommand{\ut}{U_t}
\begin{document}

\author{Paola Frediani, Matteo Penegini, Paola Porru}

\title[Shimura varieties via Galois  coverings of elliptic curves ]{Shimura varieties in the Torelli locus via Galois coverings of elliptic curves }

\address{Universit\`{a} di Pavia} \email{paola.frediani@unipv.it}
\address{Universit\`{a} di Milano}
\email{matteo.penegini@unimi.it}
\address{Universit\`a di Pavia}
\email{paola.porru01@ateneopv.it} 

\thanks{ 
} \subjclass[2000]{14G35, 14H15, 14H40, 32G20 (primary) and 14K22
  (secondary)}

\thanks{ The first  author was partially supported by PRIN
  2012 MIUR ''Moduli, strutture geo\-me\-tri\-che e loro
  applicazioni''  and  by
  FIRB 2012 ''Moduli spaces and applications'' .   The  third author was partially
  supported by PRIN 2010 MIUR ``Geometria delle Variet\`a Algebriche".
 The three authors were partially supported by INdAM (GNSAGA).
} \subjclass[2000]{14G35, 14H15, 14H40, 32G20 (primary) and 14K22
  (secondary)}

\maketitle

\begin{abstract}
We study Shimura subvarieties of $\ag$ obtained  from families of Galois coverings $f: C \rightarrow C'$ where $C'$ is a smooth complex projective curve of genus $g' \geq 1$ and $g= g(C)$. We give the complete list of all such families  that satisfy a simple sufficient condition that ensures that the closure of the image of the family via the Torelli map yields a Shimura  subvariety of $\ag$ for $g' =1,2$ and for all $g \geq 2,4$ and for $g' > 2$ and $g \leq 9$. In \cite{fgp} similar computations were  done in the case $g'=0$. Here we find 6 families of Galois coverings, all with $g' = 1$ and $g=2,3,4$ and we show that these are the only families with $g'=1$ satisfying this sufficient condition. We show that among these examples two families yield new Shimura subvarieties of $\ag$, while the other examples arise from certain Shimura subvarieties of $\ag$ already obtained as families of Galois coverings of $\PP^1$ in \cite{fgp}. Finally we prove that if a family satisfies this sufficient condition with $g'\geq 1$, then $g \leq 6g'+1$.  \end{abstract}

\tableofcontents{}

\section{Introduction}

The purpose of this paper is to continue the investigation started in \cite{fgp} of those special subvarieties of $\ag$ contained in the Torelli locus arising from families of Jacobians of Galois coverings $f: C \rightarrow C'$ where $C'$ is a smooth complex projective curve of genus $g' \geq 1$, $g=g(C)$. In \cite{fgp} the authors systematically studied families of Galois covering of $\PP^1$ following the previous work done by Moonen \cite{moonen-special} in the cyclic case and initiated in  \cite{shimura-purely-transcendental, mostow-discontinuous,
    dejong-noot, rohde} (see also the survey \cite[\S  5]{moonen-oort}). 
    
    More precisely, denote by $\ag$ the moduli space  of principally polarized abelian varieties of dimension $g$ over $\C$, by $\mg$ the moduli space of
  smooth complex algebraic curves of genus $g$ and by $j \colon \mg
  \ra \ag$ the period mapping or Torelli mapping. Set $\jac:=j
  (\mg)$ and call it the open Torelli locus.  The closure of $\jac$ in
  $\ag$ is called the \emph{Torelli locus} (see e.g.
  \cite{moonen-oort}) and is denoted by $\tor$. 
    
  The expectation formulated by Oort (\cite{oort-can}) is that for large enough genus $g$  there should not exist a positive-dimensional special subvariety $\zg$ of $\ag$, such that
  $\zg \subset \tor$ and $\zg\cap\jac\neq \vacuo$.  
  
  One reason for this expectation coming from differential geometry is that  a special (or Shimura) subvariety of $\ag$ 
  is totally geodesic with respect to the (orbifold) metric of $\ag$ induced by the symmetric metric on the Siegel space $\mathfrak{H}_g$ of which $\ag$ is a quotient by  $Sp(2g, 
 \Zeta)$. One expects the Torelli locus to be very curved and a way of expressing this is to say that it should not contain totally geodesic subvarieties. Important results in this direction were achieved in \cite{hain},  \cite{dejong-zhang}, \cite{toledo}, \cite{liu-yau-ecc}, \cite{lu-zuo-Mumford-prep}, \cite{chenluzuo},
  \cite{grushevsky-moeller-prep}.  In \cite{cfg} a study of the second fundamental form of the period map allowed to give an upper bound  for the possible dimension of a totally geodesic submanifold of $\ag$ contained in the Torelli locus.  This study was based on previous work on the second fundamental form of the period map done in \cite{cpt}, \cite{cf1}, \cite{cf2}. 
Moreover an important theorem of Moonen
  \cite{moonen-linearity-1} says that an algebraic totally geodesic
  subvariety of $\ag$ is special if and only if it contains a CM
  point, so the expectation formulated by Oort is both of geometric and  arithmetic nature. 
  See \cite[\S 4]{moonen-oort} for more details.

  On the other hand, as we mentioned above, for low genus $g \leq 7$
  there are examples of such $\zg$ and they are all constructed as families of Jacobians of Galois coverings of the line (see
  \cite{shimura-purely-transcendental, mostow-discontinuous,
    dejong-noot, rohde, moonen-special}, \cite[\S
  5]{moonen-oort} for the abelian Galois coverings, \cite{fgp} for the non abelian  case and for a complete list). 
  
  All the examples of families of Galois coverings constructed so far satisfy a sufficient condition to yield a Shimura subvariety that we briefly explain. 
  Consider  a Galois covering $f: C \rightarrow C' = C/G$, where $G \subset Aut(C)$ is the Galois group, $C'$ is a curve of genus $g'$.  Set $g = g(C)$, then one has a monomorphism of $G$ in the mapping class group $ \Map_g := \pi_0 ( \Diff^+ (C))$. The fixed point locus ${\mathcal T}_g^G$ of the action of $G$ on  the Teichm\"uller space 
   ${\mathcal T}_g$  is
  a complex submanifold of dimension $3g'-3 + r$ (see section 2). We consider its image $\mathsf{M}$ in $\mg$ and then the closure $\mathsf{Z}$ of the image of $\mathsf{M}$ in $\ag$ via the Torelli morphism. 
  
  Set $N:= \dim (S^2 H^0(C, K_C)) ^ G$, then the condition that we will denote by $(*)$ is that $N$ must be equal to the dimension of $\mathsf{Z}$, that is: 
  $$ (*) \ \ N = 3g'-3+r.$$
   In \cite{cfg} it is proven that this condition implies that the subvariety $\mathsf{Z}$ is totally geodesic  and in \cite{fgp} it is proven that in fact  it gives a Shimura subvariety in the case $g'=0$ and the same proof also works  if $g' >0$ as we remark in section 3. Moonen proved using arithmetic methods that condition $(*)$ is also necessary in the case of cyclic Galois coverings of $\PP^1$. 
 Results in this direction can also be found in \cite{mohazuo}. 
 
 In \cite{fgp} the authors gave the complete list of all the families of Galois coverings of $\PP^1$ of genus $g \leq 9$ satisfying condition $(*)$ and hence yielding Shimura subvarieties of $\ag$ contained in the Torelli locus. 
 
 In this paper we do the same for Galois coverings of curves of higher genus $g'$ and we find new examples when $g'=1$. We also prove that if $g'\geq1$, and the family satisfies $(*)$, then $g \leq 6g'+1$. This immediately implies that if $g'=1$ there are no  examples satisfying condition $(*)$ for $g \geq 8$. More precisely, we have  the following: 
 
 \begin{teo}
 \label{main}

 For all  $g \geq  2$ and $g' = 1$ there exist exactly  6 positive dimensional families of Galois coverings satisfying condition $(*)$, hence yielding Shimura subvarieties of $\ag$ contained in the Torelli locus.  
 
 Two of the 6 families yield new Shimura subvarieties, while the others yield Shimura subvarieties which have already been obtained as families of Galois coverings of $\PP^1$ in \cite{fgp}.

For all $g >3$ and  $g' =2$  there  do not exist positive dimensional families of Galois coverings satisfying condition $(*)$.

 For $g \leq 9$ and $g' > 2$ there  do not exist positive dimensional families of Galois coverings satisfying condition $(*)$.  
  \end{teo}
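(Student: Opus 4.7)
The plan is to reduce the problem to a finite check by first establishing the a priori bound $g \leq 6g'+1$, then enumerating all admissible Hurwitz data in the resulting finite range, and finally distinguishing the genuinely new Shimura subvarieties from those already appearing in \cite{fgp}.

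For the bound, one decomposes the $G$-representation $V := H^0(C, K_C)$ as $V = V_0 \oplus V'$, where $V_0 := V^G = f^*H^0(C', K_{C'})$ has dimension $g'$ and $V'$ contains no trivial summand, so $\dim V' = g - g'$. Since $(V_0 \otimes V')^G = 0$, we get $(S^2 V)^G = S^2 V_0 \oplus (S^2 V')^G$, hence
\[
N \;=\; \binom{g'+1}{2} + \dim (S^2 V')^G.
\]
Condition $(*)$ thus forces $\dim(S^2 V')^G = 3g' - 3 + r - \binom{g'+1}{2}$. To bound the left-hand side from below in terms of $g-g'$ (and hence bound $g$ from above) I would use the Chevalley--Weil / Eichler trace formula, which gives $\chi_V(h)$ for $h \neq 1$ in terms of $g'$ and the local rotation numbers at the fixed points of $h$; the character formula
\[
\dim (S^2 V)^G \;=\; \frac{1}{|G|}\sum_{h \in G} \frac{\chi_V(h)^2 + \chi_V(h^2)}{2}
\]
then provides the desired estimate. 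Combined with Riemann--Hurwitz $2g - 2 = |G|\bigl(2g' - 2 + \sum_{i=1}^r (1 - 1/m_i)\bigr)$, this should yield the claimed inequality $g \leq 6g'+1$.

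With the bound in hand, the classification becomes finite: for $g'=1$ inspect $g \in \{2,\dots,7\}$; for $g'=2$ inspect $g \in \{3,\dots,13\}$; for $g' \geq 3$ only the pairs with $g \leq 9$ remain. For each pair $(g,g')$ I would enumerate all Hurwitz data $(G; m_1,\dots,m_r)$ that are admissible under Riemann--Hurwitz and such that the orbifold fundamental group of $(C'; y_1,\dots,y_r)$ surjects onto $G$ with prescribed local monodromy orders. For each such datum I would compute $N$ via the character formula above and retain only those satisfying $N = 3g'-3+r$. This enumeration is combinatorially substantial and best carried out with a computer algebra system looping over small groups; the expected output is exactly the six families for $g'=1$ and no positive-dimensional examples otherwise.

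For the final step, given the six families with $g'=1$, I would determine whether the resulting Shimura subvariety $\mathsf{Z}$ already occurs in the classification of \cite{fgp}. The most effective criterion is the generic isogeny decomposition of the Jacobian: if $J(C)$ splits, up to isogeny, as $E \times A_0$ with $E = J(C')$ and $A_0$ the Jacobian of a quotient $C/H$ whose family appears in \cite{fgp}, the construction does not produce a new Shimura subvariety; otherwise it does, and one expects two of the six families to be genuinely new. The main obstacle in the whole argument is the sharp bound $g \leq 6g'+1$, which requires tight control of $\dim(S^2 V')^G$ from the characters of the $G$-action; the subsequent enumeration, though large, is systematic.
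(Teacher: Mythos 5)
Your overall strategy (prove the bound $g\le 6g'+1$, reduce to a finite computer search, then compare with the $\PP^1$-families of \cite{fgp}) matches the architecture of the paper, and the middle step --- enumeration of Hurwitz data with $N$ computed from the Chevalley--Weil/Eichler character and condition $(*)$ tested --- is exactly what the \verb|MAGMA| script does. But the crucial step, the bound $g\le 6g'+1$, has a genuine gap as you present it. You propose to obtain it by estimating $\dim(S^2V')^G$ from below via character formulas and combining with Riemann--Hurwitz, and you say this ``should yield'' the inequality; it does not, at least not in any visible way, and the constant $6$ is a strong hint that something else is needed. The paper's proof is geometric, not representation-theoretic: condition $(*)$ forces the codifferential of the map $\Psi\colon\mathsf{M}\to\mathsf{A}_{g-g'}(\Theta)$ (sending a cover to the polarized quotient $W=J(C)/f^*J(C')$) to be injective, because it factors through the multiplication map $(S^2H^0(K_C))^G\to H^0(2K_C)^G$, which $(*)$ makes an isomorphism. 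Hence $\Psi$ is a submersion onto its image, its generic fibre has dimension $g'(g'+1)/2\ge 1$, and one extracts a non-isotrivial one-dimensional family of curves of genus $g$ whose Jacobians all surject onto the fixed $(g-g')$-dimensional abelian variety $W$. After resolving and base-changing one gets a fibred surface $h\colon S\to B$ with relative irregularity $q-b\ge g-g'$, and Xiao's slope inequality $q-b\le\frac{5g+1}{6}$ (Corollary 3 of \cite{xiao}) gives $g-g'\le\frac{5g+1}{6}$, i.e.\ $g\le 6g'+1$. This is where the $6$ comes from; a purely local character estimate cannot see it, and without this input your finite reduction for $g'=1$ and $g'=2$ does not get off the ground (note the theorem for $g'>2$ only claims $g\le 9$, so no bound is needed there).

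A secondary point: your criterion for deciding whether a $g'=1$ family yields a new Shimura subvariety --- whether $J(C)$ splits up to isogeny as $E\times A_0$ with $A_0$ coming from a family in \cite{fgp} --- is not the right test and is neither clearly necessary nor sufficient. The paper argues instead that if the Shimura subvariety coincides with one coming from a family of covers of $\PP^1$, then the generic curve $X$ of the family carries a larger group $\tilde G\supseteq G$ with $X/\tilde G\cong\PP^1$, and the $\tilde G$-family again satisfies $(*)$, hence must appear in Table~2 of \cite{fgp}. Newness is then decided by dimension and subgroup constraints (e.g.\ family $(2)$ has dimension $4$, exceeding every entry of that table; family $(6)$ has dimension $3$ and no table entry of dimension $\ge 3$ contains $\Zeta/3\Zeta$ properly), while for the remaining four families one exhibits the overgroup $\tilde G$ explicitly and tracks the branch data through the factorization $X\to X/G\to X/\tilde G$. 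You would need to replace your isogeny heuristic by an argument of this kind to complete the last step.
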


  \begin{teo}
  \label{xiaointro}
  If $g'\geq 1$ and we have a positive dimensional family of Galois coverings $f:C \ra C'$ with $g'= g(C')$ and $g = g(C)$ which satisfies condition  $(*)$, then $g \leq 6g'+1$.

  \end{teo}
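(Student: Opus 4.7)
The plan is to restrict the family to a general one-parameter sub-family, producing a fibered surface of genus $g$ equipped with a fibrewise $G$-action, and then apply Xiao's slope inequality on the relative irregularity. The bound $g \leq 6g' + 1$ is precisely what Xiao's inequality $q_f \leq (5g+1)/6$ yields once one has established the companion lower bound $q_f \geq g - g'$.

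First, I would choose a smooth projective curve $B$ inside a compactification of $\mathsf{M}$ passing through a generic point, producing, after semistable reduction, a relatively minimal non-isotrivial semistable fibration $f\colon S \to B$ of genus $g$ together with a fibrewise $G$-action; the quotient $S/G \to B$ is a fibration of genus $g'$. The existence of such a non-isotrivial $B$ is guaranteed by the positive-dimensionality of $\mathsf{M}$ together with the injectivity of the Torelli map on a dense open subset, which forces the image in $\ag$ itself to be positive-dimensional.

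The heart of the argument is to show that $q_f := q(S) - g(B) \geq g - g'$. The variation of Hodge structure $R^1 f_*\QQ$ splits $G$-isotypically into an invariant summand of rank $2g'$ (matching the VHS of the quotient fibration $S/G \to B$) and a complementary non-invariant summand of rank $2(g-g')$, coming from the family of Pryms of $C_b \to C'_b$. Under $(*)$, by \cite{fgp} and Section 3 here, the subvariety $\mathsf{Z} \subset \ag$ is a Shimura subvariety; exploiting the Mumford--Tate / isotypical decomposition of the generic polarized Hodge structure forced by this Shimura property, one argues that the non-invariant sub-VHS is isotrivial, contributing $g - g'$ flat holomorphic $1$-forms on $S$ and so $q_f \geq g - g'$.

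Finally, Xiao's slope inequality for relatively minimal non-isotrivial semistable fibrations of genus $g \geq 2$ asserts $q_f \leq (5g+1)/6$. Combining this with the previous estimate gives $g - g' \leq (5g+1)/6$, which rearranges to $g \leq 6g' + 1$, as desired. The main obstacle is the proof that the non-invariant sub-VHS is isotrivial: condition $(*)$ is a priori only a statement about the tangent space at a generic point of $\mathsf{Z}$, and passing from this infinitesimal information to the global isotriviality of the Prym piece genuinely requires the Shimura (not merely totally-geodesic) structure, most likely via the rigidity of the fixed part of a polarized variation of Hodge structure.
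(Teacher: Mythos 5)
Your overall skeleton --- produce a non-isotrivial fibered surface $h\colon S\to B$ of genus $g$ with relative irregularity at least $g-g'$ and then invoke Xiao's bound $q-b\le \frac{5g+1}{6}$ --- is exactly the strategy of the paper. The gap is in the middle step. For a \emph{general} one-parameter subfamily $B\subset \mathsf{M}$ the Prym part of $R^1h_*\QQ$ is \emph{not} isotrivial: under $(*)$ the map $\Psi\colon\mathsf{M}\to\mathsf{A}_{g-g'}(\Theta)$ sending a curve to its polarized Prym $W=J(C)/f^*(J(C'))$ has differential surjecting onto $(S^2H^{0,1}(W))^G$, so its image $\mathsf{P}$ has dimension $N-\frac{g'(g'+1)}{2}$, which is positive in the relevant examples (e.g.\ family $(2)$ has $g'=1$ and $N=4$). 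The Shimura/PEL structure of $\zg$ decomposes it according to the $G$-isotypic pieces of $H^1$, but it does not force the Prym factor to be rigid. Indeed, if your isotriviality claim held for a general pencil, the same argument would apply verbatim when $g'=0$ (where the Prym piece is the whole Jacobian), yielding $g\le 1$ and contradicting the known $g'=0$ families with $g$ up to $7$; note that nothing in your derivation of $q_f\ge g-g'$ actually uses $g'\ge 1$, which is a warning sign.

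The missing idea is to choose $B$ inside a \emph{fiber} of $\Psi$, i.e.\ along a locus where the Prym is literally constant. This is where both $(*)$ and $g'\ge1$ enter: condition $(*)$ makes the multiplication map $(S^2H^0(K_C))^G\to H^0(C,2K_C)^G$ an isomorphism, hence its restriction to $(S^2H^0(K_C)^-)^G$ (the codifferential of $\Psi$) injective, hence $d\Psi_{\mathsf{p}}$ surjective; therefore the generic fiber of $\Psi$ has dimension exactly $\dim\mathsf{M}-\dim\mathsf{P}=\frac{g'(g'+1)}{2}$, which is $\ge1$ precisely because $g'\ge1$. Taking a curve $Y\subset\Psi^{-1}(W,\Theta)$, resolving and base-changing to obtain a section, one composes Abel--Jacobi maps with the projections $J(C_t)\to W$ to inject $H^1(W,\QQ)$ into $H^0(B,R^1h_*\QQ)$, identified with the cokernel of $h^*\colon H^1(B,\QQ)\to H^1(S,\QQ)$; this gives $q-b\ge g-g'$. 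From there your application of Xiao's Corollary 3 is correct and finishes the proof.
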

  
  The proof of Theorem \ref{main} for $g \leq 9$, $g' \geq 1$ (and extend to $g\leq 13$ for $g'=2$) is done using   the  \verb|MAGMA| script that can be found at:
   
  \verb|users.mat.unimi.it/users/penegini/|

  \verb|publications/PossGruppigFix_Elliptic_v2.m|

  \smallskip

Theorem 
\ref{xiaointro} allows us to exclude the existence of any other family satisfying $(*)$ when $g'=1$ or $2$. The proof of Theorem \ref{xiaointro}  uses a result of Xiao (\cite{xiao}) and an argument analogous to the one used  in \cite{pietroxiao} to give a counterexample to a conjecture of Xiao on the relative irregularity of a fibration of a surface on a curve.

The 6 families  with $g'=1$ satisfying $(*)$, that is $N =r$, are the following:

 $(1)$ $g = 2$, $G = \Zeta/2\Zeta$, $N=r=2$. 
 
 $(2)$ $g =3$, $G= \Zeta/2\Zeta$, $N=r=4$.
 
  $(3)$ $g =3$, $G= \Zeta/3\Zeta$, $N=r=2$.
  
   $(4)$ $g =3$, $G= \Zeta/4\Zeta$, $N=r=2$.
   
    $(5)$ $g =3$, $G= Q_8$, $N=r=1$.
    
     $(6)$ $g =4$, $G= \Zeta/3\Zeta$, $N=r=3$.
     
     Family $(2)$ and family $(6)$ give two new Shimura subvarieties, while the others yield Shimura subvarieties which have already been obtained as families of Galois coverings of $\PP^1$ in \cite{fgp}.
     
    More precisely: 
    
    $(1)$ gives the same subvariety as family $(26)$ of Table 2 in \cite{fgp} (this family was already found in \cite{moonen-oort}). 
    
    $(3)$ gives the same subvariety as family $(31)$ of Table 2 in \cite{fgp}.
    
   $(4)$ gives the same subvariety as family $(32)$ of Table 2 in \cite{fgp}.
     
  $(5)$ gives the same subvariety as family $(34)$ of Table 2 in \cite{fgp}.
  
  All the above families with $g \geq 3$ are not contained in the hyperelliptic locus.

   A complete description of the families is given in section 4.

In section 4 we also show with a simple explicit computation that the families  we found with  \verb|MAGMA|  indeed satisfy condition $(*)$,  using Eichler trace formula (Theorem \eqref{Eichler}). 

We also notice that in {\cite{kurikuri} a classification of all the representations of the actions of the possible groups $G$ on the space of holomorphic one forms of a curve of genus $g =3,4$ is given and also using their description one can verify that in genus $3,4$ our families are the only ones satisfying condition $(*)$ if $g'=1$.

We finally  observe that the new family that we find in genus $g=4$ is very interesting also because it is the same family used by Pirola in \cite{pietroxiao} to give the above mentioned counterexample to a conjecture of Xiao on the relative irregularity of a fibration of a surface on a curve.  

The paper is organised as follows. 

In section 2 we recall some basic facts on Galois coverings of curves and we explain the construction of the families. 

In section 3 we recall very briefly the definitions and results (mostly without proofs) on special subvarieties of $\ag$ of PEL type and we show how the condition $(*)$ implies that the family yields a Shimura subvariety  following \cite{fgp}. 

In section 4 we give the explicit description of the new examples of special subvarieties obtained as Galois coverings of a genus 1 curve and we prove  Theorem \ref{main} and Theorem \ref{xiaointro}. 

In section 5 we briefly describe what the \verb|MAGMA| script does and we give the link to the script. 

\medskip

{\bf Acknowledgement}  It is a pleasure to thank E. Colombo, A. Ghigi, G.P. Pirola and C. Gleissner for stimulating discussions.

\section{Galois coverings }
\label{covering-section}
\begin{say} \label{theo: Riemann} 
Fix a compact Riemann surface $Y$ of genus $g' \geq 0$.
Let $t: = (t_1, \lds, t_r)$ be an $r$-tuple of distinct points in
$Y$.  Let us set $\ut := Y\setminus \{t_1, \lds, t_r\}$ and choose a
base point $t_0 \in \ut$. There exists an
isomorphism $\pi_1(\ut, t_0 ) \cong \Ga_{g',r}:= \langle \alpha _1, \beta _1, \dots \alpha _{g'}, \beta _{g'}, \gamma _1, \dots \gamma _r   \ | \ \prod _1 ^r \gamma _i \prod _1 ^{g'} \left[ \alpha _j, \beta _j \right] = 1 \rangle$ given by the choice of a geometric basis of $\pi_1(\ut, t_0 )$ as follows: 

$\alpha_1,\beta_1,...,\alpha_{g'}, \beta_{g'}$ are simple loops in $Y \setminus  \{t_1, \lds, t_r\}$ which only intersect in $t_0$, whose homology classes in $H_1(Y, \Zeta)$ form a symplectic basis.  

Let $\tilde{\gamma_i}$ be an arc connecting $t_0$ with $t_i$  contained in $(Y \setminus \{\alpha_1,\beta_1,...,\alpha_{g'}, \beta_{g'}\}) \cup t_0$ and such that for $ i \neq j$, $\tilde{\gamma_i}$ and  $\tilde{\gamma_j}$ only intersect in $t_0$.  We also assume that $\tilde{\gamma_1}, ...\tilde{\gamma_r}$ stem out of $t_0$ with distinct tangents following each other in counterclockwise order. The loops $\gamma_1,...,\gamma_r$ are defined as follows: $\gamma_i$ starts at $t_0$, goes along $\tilde{\gamma_i}$ to a point near $t_i$, makes a small simple loop counterclockwise around $t_i$ and goes back to $t_0$ following $\tilde{\gamma_i}$. 

 If $f\colon C \longrightarrow Y$ is
a Galois cover with branch locus $t$, set $V := f^{-1}(\ut)$. Then
$f\restr{V} : V \ra \ut$ is an unramified Galois covering. Let $G$
denote the group of deck transformations of $f\restr{V}$.  Then there
is a surjective homomorphism $
\pi_1(\ut, t_0 ) \longrightarrow G$, which is well-defined up to
composition by an inner automorphism of $G$.  Since $\Ga_{g',r} \cong
\pi_1(\ut,t_0)$ we get an epimorphism $\theta : \Ga_{g',r} \ra G$. If $m_i$
is the local monodromy around $t_i$, set  $\mm=(m_1, \lds, m_r)$. 

\begin{defin}
  A \emph{datum} is a triple $\datum$, where $\mm :=(m_1, \ldots ,m_r)
  $ is an $r$-tuple of integers $m_i \geq 2$, $G$ is a finite group
  and $\theta : \Ga_{g',r} \ra G$ is an epimorphism such that $\theta
  (\ga_i)$ has order $m_i$ for each $i$.
\end{defin}

 Thus a Galois cover of $Y$
branched over $t$ gives rise -- up to some choices -- to a datum. The
Riemann's existence theorem ensures that the process can be reversed:
a branch locus $t$ and a datum determine a covering of $Y$ up to
isomorphism (see e.g. \cite[Sec. III, Proposition 4.9]{M95}).  
The genus $g$ of the Riemann surface $C$ is given by Riemann-Hurwitz formula:
 \begin{equation*}\label{form.RH} 2g - 2 = |G|\left(2g'-2 +
      \sum_{i=1}^r \left(1 - \frac{1}{m_i}\right)\right).
  \end{equation*}

We will 
 show that the process can be reversed also in families, namely that
to any datum is associated a family of Galois covers of a compact Riemann surface $Y$ of genus $g'$.
\end{say}

\begin{say}
  \label{family}
  In fact let $\datum$ be a datum.  
  Choose a point $[Y, t=(t_1,...,t_r), \psi]$ in the Teichm\"uller space  $\mathcal{T}_{g',r}$. This means that $Y$ is a compact Riemann surface of genus $g'$, $t=(t_1,...,t_r)$ is an r-tuple of points in $Y$ such that $t_i \neq t_j$ for $i \neq j$ and $\psi: \pi_1(\ut, t_0) \cong \Gamma_{g',r} $ is an isomorphism, where $t_0$ is a base point in $U_t$.   Using $\theta \circ \psi$, by the above, we get a $G$-cover $C_t
  \ra Y$ branched at the points $t_i$ with local monodromies
  $m_1,\dots,m_r$.  This yields a monomorphism of $G$ into the mapping
  class group $ \Map_g := \pi_0 ( \Diff^+ (C_t))$.  Denote by ${\mathcal T}_g^G$
  the fixed point locus of $G$ on the Teichm\"uller space ${\mathcal T}_g$. It is
  a complex submanifold of dimension $3g'-3 + r$, isomorphic to the
  Teichm\"uller space ${\mathcal T}_{g',r}$ (see e.g. \cite{baffo-linceo,gavino}).
  This isomorphism can be described as follows: if $(C,\phi) $ is a
  curve with a marking such that $[(C, \phi)] \in {\mathcal T}_g^G$, the
  corresponding point in ${\mathcal T}_{g',r}$ is $[(C/G, \psi, b_1, \lds, b_r)]$,
  where $\psi$ is the induced marking (see \cite{gavino}) and $b_1,
  \lds, b_r$ are the critical values of the projection $C \ra C/G$.

  We remark that on ${\mathcal T}_g^G$ we have a universal family $\mathcal{C}
  \to {\mathcal T}_g^G$ of curves with a $G$--action. It is simply the
  restriction of the universal family on ${\mathcal T}_g$.

  We denote by $\mathsf{M}\datum$ the image of ${\mathcal T}_g^G$ in $\mg$. It is an irreducible algebraic
  subvariety of the same dimension as ${\mathcal T}_g^G\cong {\mathcal T}_{g',r}$, i.e. $3g'-3+r$
  (see e.g. \cite{baffo-linceo,gavino}).  Applying the Torelli map to
  $\mathsf{M}\datum$ one gets a subset of $\ag$.  We let
  $\zg(\mm,G,\theta)$ denote the closure of this subset in $\ag$.  By
  the above it is an algebraic subvariety of dimension $3g'-3+r$.
\end{say}

\begin{say}
  Different data $\datum$ and $(\mm, G, \theta')$ may give rise to the
  same subvariety of $\mg$. This is related to the choice of the
  isomorphism $\pi_1(\ut, t_0 ) \cong \Ga_{g', r}$.  The change from one
  choice to another can be described using an action of the full mapping class
  group ${\rm Map}_{g',[r]}:= \pi_0({\rm Diff}^+(Y\setminus \{t_1, \ldots .t_r\}))$. This action is described for example in \cite[Proposition 1.14]{penegini2013surfaces}.  

One has an injection of ${\rm Map}_{g',[r]} $ in ${\rm Out}^+(\Ga_{g',r})$ described in \cite[Section 4]{Macl74}.
Thus we get an action of ${\rm Map}_{g',[r]}$ on the set of data $\datum$ up to inner automorphisms.  Also the group $\Aut(G)$ acts on the set
  of data by $\alfa \cd (\mm, G, \theta) : = (\mm, G, \alfa \circ
  \theta )$.

  The orbits of the group  $\langle {\rm Map}_{g',[r]}, \,  \Aut(G) \rangle$--action (\emph{Hurwitz's moves}) are called
  \emph{Hurwitz equivalence classes}. Data in the same class give rise
  to the same subvariety $\mathsf{M}\datum$ and hence to the same
  subvariety $\zg\datum \subset \ag$. For more details see
  \cite{penegini2013surfaces,baffo-linceo,birman-braids}.

\end{say}

\begin{say}
  \label{say-ssg}
  
  Consider a datum $\datum$ and set $x_i := \theta(\ga_i)$.  If $C$ is a curve with an action of a group $G$ and  with datum
  $\datum$, then the cyclic subgroups $\left\langle
    x_{i}\right\rangle$ and their conjugates are the non-trivial
  stabilizers of the action of $G$ on $C$.  
  
  Near the fixed points the action of the
  stabilizers  can be described in
  terms of the epimorphism $\theta$, see \cite[Theorem 7]{Ha71}.   
  In fact, suppose that an element
  $\gr \in G$ fixes a point $P \in C$ and let $m$ be the order of $\gr$.
  The differential $d\gr_P$ acts on $T_P C$ by multiplication by an
  $m$-th root of unity $\zeta_P(\gr)$. The action can be linearized in
  a neighbourhood of $P$, i.e.  there is a local coordinate $z$ centered in $P$, such that $\gr $
  acts as $ z \mapsto \zeta_{P}(\gr ) z$.  So $\zeta_P(\gr )$ is a
  primitive $m$-th root of unity.  (See also \cite[Cor. III.3.5
  p. 79]{M95}.)

  Denote by $\Fix(\gr)$ the set of fixed points of
  $\gr$  and  set $ \zeta_m = e^{2\pi i / m} $, $I(m) := \{\nu \in \Zeta: 1\leq \nu < m, \gcd(\nu, m) =1 \}.$
 For $\nu \in I(m)$ consider
  
  \begin{gather*}
    \Fix_\nu (\gr ) : = \{P \in C : {\bf g } P = P , \zeta_P (\gr ) =
    \zeta_m^{-\nu}\}.
  \end{gather*}

\end{say}

\begin{lemma}
  If $G \subseteq\Aut(C)$ and $\gr \in G$ has order $m$, then
  \begin{gather*}
    | \Fix_\nu (\gr ) | = | C_G(\gr ) | \cd \sum_{
      \substack{ 1 \leq i \leq      r , \\
        m | m_i ,\\
        \gr \sim_G x_i ^{m_i \nu / m}}} \frac{1}{m_i}.
  \end{gather*}
\end{lemma}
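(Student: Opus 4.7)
The plan is to stratify $\Fix_\nu(\gr)$ by which branch point a fixed point lies over, and then to convert the two defining conditions into a single conjugacy relation in $G$ that can be counted by an orbit--stabilizer argument.

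First, since $f|_V\colon V\ra \ut$ is \'etale and $G$-equivariant, any $P\in\Fix(\gr)$ must lie above some branch point $t_i$. Fix such an $i$ and pick a base preimage $P_0\in f^{-1}(t_i)$ whose stabilizer is exactly $\langle x_i\rangle$ (this is built into the datum construction of \ref{theo: Riemann}). Then $f^{-1}(t_i)=\{g\cd P_0:g\in G\}$, the stabilizer of $g\cd P_0$ equals $g\langle x_i\rangle g\meno$, and $g\cd P_0=g'\cd P_0$ precisely when $g\meno g'\in\langle x_i\rangle$. In particular any fixed point contributing to the sum has the form $g\cd P_0$ for some $g$ with $g\meno\gr g\in\langle x_i\rangle$.

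Next I would translate the two conditions ``$\gr$ fixes $g\cd P_0$'' and ``$\zeta_{g\cd P_0}(\gr)=\zeta_m^{-\nu}$'' into a single equality in $G$. The first gives $g\meno\gr g=x_i^k$ for some $k$, and since $\gr$ has order $m$ it forces $m\mid m_i$ and $x_i^k$ to have order exactly $m$. For the second, the differential $dg\colon T_{P_0}C\ra T_{g\cd P_0}C$ conjugates the tangent action of $x_i^k$ to that of $\gr$, so on the one-dimensional space $T_{g\cd P_0}C$ the eigenvalue $\zeta_{g\cd P_0}(\gr)$ equals the eigenvalue of $x_i^k$ on $T_{P_0}C$. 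With the standard convention (forced by the counterclockwise orientation of $\gamma_i$ in \ref{theo: Riemann}) that $x_i$ acts on $T_{P_0}C$ by $\zeta_{m_i}\meno$, the prescription $\zeta_{g\cd P_0}(\gr)=\zeta_m^{-\nu}$ becomes $k\equiv m_i\nu/m\pmod{m_i}$, i.e.\ $g\meno\gr g=x_i^{m_i\nu/m}=:y_i$.

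Finally I would count such $g$. The set $\{g\in G:gy_ig\meno=\gr\}$ is empty when $\gr\not\sim_G y_i$ and otherwise is a left coset of $C_G(y_i)$, of cardinality $|C_G(y_i)|=|C_G(\gr)|$; since $\langle x_i\rangle\subseteq C_G(y_i)$, this coset is a disjoint union of left $\langle x_i\rangle$-cosets of size $m_i$, giving $|C_G(\gr)|/m_i$ distinct preimages of $t_i$ in $\Fix_\nu(\gr)$. Summing over $i$ with $m\mid m_i$ and $\gr\sim_G x_i^{m_i\nu/m}$ yields the claimed formula. The only genuinely delicate point is pinning down the sign convention in the tangent action of the local monodromy generator; once that is settled, everything reduces to routine orbit--stabilizer bookkeeping.
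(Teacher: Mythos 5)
Your argument is correct. Note that the paper does not actually prove this lemma: it only cites Harvey's Theorem 7 and Breuer's Lemma 11.5, and your fiberwise stratification combined with the orbit--stabilizer count is in substance the standard proof found in those references. The one input that is not pure group theory is the normalization that the distinguished generator $x_i$ of the stabilizer of a suitable point $P_0$ over $t_i$ acts on $T_{P_0}C$ by $\zeta_{m_i}^{-1}$; this is precisely the content of the cited Theorem 7 of Harvey (recalled in \ref{say-ssg}), and it is the convention forced by the paper's definition of $\Fix_\nu$ together with the exponent $m_i\nu/m$ appearing in the statement. You correctly isolate this as the only delicate point; the rest --- that the solution set of $g\meno\gr g=x_i^{m_i\nu/m}$ is either empty or a left coset of $C_G(x_i^{m_i\nu/m})$, hence a union of $|C_G(\gr)|/m_i$ left $\langle x_i\rangle$-cosets each corresponding to one point of $f^{-1}(t_i)$ lying in $\Fix_\nu(\gr)$ --- is routine and checks out.
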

(Here $C_G(\gr)$ denotes the centralizer of $\gr$ in $G$ and $\sim_G$
denotes the equivalence relation given by conjugation in $G$.)  This
lemma follows from \cite[Theorem 7]{Ha71}, see also \cite[Lemma
11.5]{breuer}.

\begin{say}\label{say_rep}
  Given a $G$-Galois cover $C \rightarrow Y$ let $ \rho\colon G
  \longrightarrow {\rm GL}(H^0(C, K_C)) $ be the representation on
  holomorphic 1-forms given by $g \mapsto (g^{-1})^*$. Let $\chi_\rho$ be the character of
  $\rho$. Notice that up to equivalence the representation $\rho$ only
  depends on the data $(\mm, G, \theta)$.
  \end{say}

\begin{teo}
\label{Eichler}
[Eichler Trace Formula]

Let $\gr $ be an automorphism of order $m>1$ of a Riemann surface $C$
of genus $g>1$. Then
\begin{equation}\label{eq_EichlerFormula}
  \chi_\rho(\gr )=
  \spur(\rho({\gr}))=1+\sum_{P\in \Fix(\gr)}\frac{\overline{\zeta_P(\gr)}}{1-\overline{\zeta_P(\gr)}}.
\end{equation}  
\end{teo}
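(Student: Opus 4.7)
My plan is to derive the Eichler trace formula from the holomorphic Lefschetz fixed-point theorem of Atiyah--Bott applied to the action of $\gr$ on the canonical sheaf $K_C$ of $C$. For each fixed point $P \in \Fix(\gr)$, I pick a local coordinate $z$ centered at $P$ in which $\gr$ acts as $z \mapsto \zeta_P(\gr)\,z$; such a linearization is available by the discussion in \ref{say-ssg}. In these coordinates $d\gr_P$ acts on $T_P C$ by multiplication by $\zeta_P(\gr)$, and the induced pullback $\gr^*$ on the cotangent fibre $T_P^* C \cong (K_C)_P$ sends $dz \mapsto \zeta_P(\gr)\,dz$. Since $\zeta_P(\gr) \neq 1$, every fixed point is nondegenerate, so the Atiyah--Bott formula applies.

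Applying Atiyah--Bott with $E = K_C$ gives
$$
\spur\bigl(\gr^* \mid H^0(C,K_C)\bigr) - \spur\bigl(\gr^* \mid H^1(C,K_C)\bigr) \;=\; \sum_{P \in \Fix(\gr)} \frac{\spur(\gr^* \mid (K_C)_P)}{\det(1 - d\gr_P)} \;=\; \sum_{P \in \Fix(\gr)} \frac{\zeta_P(\gr)}{1-\zeta_P(\gr)}.
$$
Serre duality identifies $H^1(C,K_C) \cong H^0(C,\mathcal{O}_C)^{\vee} \cong \C$ with the trivial $\gr$-action (the trace map $\int_C \colon H^1(C,K_C) \to \C$ is $\gr$-equivariant because $\gr$ is orientation-preserving), so its trace contribution is $1$. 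This leaves
$$
\spur\bigl(\gr^* \mid H^0(C,K_C)\bigr) \;=\; 1 + \sum_{P \in \Fix(\gr)} \frac{\zeta_P(\gr)}{1-\zeta_P(\gr)}.
$$

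Finally, I convert this into the formula for $\chi_\rho(\gr) = \spur\bigl((\gr^{-1})^* \mid H^0(C,K_C)\bigr)$. Every eigenvalue $\mu$ of $\gr^*$ on $H^0(C,K_C)$ is an $m$-th root of unity (since $\gr$ has finite order), so $\mu^{-1} = \overline{\mu}$, and hence $\chi_\rho(\gr) = \overline{\spur(\gr^* \mid H^0(C,K_C))}$. Conjugating the displayed identity and using $\overline{\zeta_P(\gr)} = \zeta_P(\gr)^{-1}$, so that $\overline{\zeta_P(\gr)/(1-\zeta_P(\gr))} = \overline{\zeta_P(\gr)}/(1-\overline{\zeta_P(\gr)})$, produces precisely \eqref{eq_EichlerFormula}. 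The point requiring most care is tracking the $\gr$-action through Serre duality and distinguishing the $(\gr^{-1})^*$ in the definition of $\rho$ from the $\gr^*$ that Atiyah--Bott produces; but these are standard compatibilities and, for finite-order $\gr$, they amount to a single complex conjugation.
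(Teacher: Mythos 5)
Your derivation is correct. Note first that the paper does not actually prove Theorem \ref{Eichler}: it simply cites \cite[Thm.\ V.2.9]{FK}, where the result is obtained by the classical function-theoretic route (constructing suitable meromorphic functions and computing via residues and Riemann--Roch). Your argument via the Atiyah--Bott holomorphic Lefschetz fixed-point formula is therefore a genuinely different, more modern proof, and all the delicate points are handled properly: the fixed points are nondegenerate because each $\zeta_P(\gr)$ is a \emph{primitive} $m$-th root of unity (as recalled in \ref{say-ssg}), so $\det(1-d\gr_P)=1-\zeta_P(\gr)\neq 0$; the local contribution is $\spur(\gr^*\mid (K_C)_P)/\det(1-d\gr_P)=\zeta_P(\gr)/(1-\zeta_P(\gr))$ since $\gr^*(dz)=\zeta_P(\gr)\,dz$; the correction term $1$ comes from the trivial action on $H^1(C,K_C)\cong H^0(C,\mathcal O_C)^{\vee}\cong\C$; and the final complex conjugation correctly accounts for the paper's convention $\rho(\gr)=(\gr^{-1})^*$, using that $\gr^*$ has finite order so its eigenvalues are roots of unity and $\spur((\gr^*)^{-1})=\overline{\spur(\gr^*)}$. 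As a sanity check, for the hyperelliptic involution on a genus $2$ curve ($6$ fixed points, $\zeta_P=-1$) your formula gives $1+6\cdot\tfrac{-1}{2}=-2=\spur(-\mathrm{id}_{\C^2})$, as it should. What the Lefschetz approach buys is brevity and generality (it works verbatim for $q$-differentials and higher-dimensional analogues); what the classical proof buys is independence from the index-theoretic machinery. Either way, your argument is complete and compatible with the statement as used in the paper.
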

(See e.g.  \cite[Thm. V.2.9, p. 264]{FK}.)  Using the previous lemma one gets the following.
\begin{cor}
  \begin{equation}
    \label{carg}
    \chi_\rho(\gr )=
    1+
    |C_G(\gr)| 
    \sum_{\nu \in I(m)} 
    \biggl\{\sum_{\substack{
        1 \leq i \leq      r , \\
        m | m_i ,\\
        \gr \sim_G x_i ^{m_i \nu / m}}}
    \frac{1}{m_i}  \biggl \} \frac{\zeta_m^\nu}{1-\zeta_m^\nu}.
  \end{equation}  
\end{cor}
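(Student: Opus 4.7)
The plan is to combine the Eichler Trace Formula (Theorem \ref{Eichler}) with the preceding Lemma, which counts $|\Fix_\nu(\gr)|$, via a partition of $\Fix(\gr)$ by local eigenvalue. Since $\gr$ has order exactly $m$, for every $P\in\Fix(\gr)$ the root of unity $\zeta_P(\gr)$ is primitive of order $m$, and hence may be written uniquely as $\zeta_m^{-\nu}$ for some $\nu\in I(m)$. This yields the disjoint decomposition
$$\Fix(\gr)=\bigsqcup_{\nu\in I(m)}\Fix_\nu(\gr),$$
which is exactly the defining partition of the sets $\Fix_\nu(\gr)$ introduced before the Lemma.

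The next step is to rewrite the Eichler sum using this partition. For every $P\in\Fix_\nu(\gr)$ the summand
$$\frac{\overline{\zeta_P(\gr)}}{1-\overline{\zeta_P(\gr)}}=\frac{\zeta_m^{\nu}}{1-\zeta_m^{\nu}}$$
depends only on $\nu$ (using $\overline{\zeta_m^{-\nu}}=\zeta_m^{\nu}$, since $\zeta_m$ has modulus $1$). Grouping the sum in \eqref{eq_EichlerFormula} by $\nu$ therefore gives
$$\chi_\rho(\gr)=1+\sum_{\nu\in I(m)}|\Fix_\nu(\gr)|\,\frac{\zeta_m^{\nu}}{1-\zeta_m^{\nu}}.$$

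To finish, I would substitute into this identity the closed expression for $|\Fix_\nu(\gr)|$ supplied by the Lemma preceding Theorem \ref{Eichler}, and then factor out $|C_G(\gr)|$, which is independent of both $\nu$ and $i$, from the resulting double sum. This produces formula \eqref{carg} directly. There is no genuine obstacle here: the whole argument is bookkeeping, and the only delicate verification is the complex-conjugation identity $\overline{\zeta_m^{-\nu}}=\zeta_m^{\nu}$, which is immediate.
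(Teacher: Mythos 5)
Your argument is correct and is exactly the paper's (implicit) proof: partition $\Fix(\gr)$ into the sets $\Fix_\nu(\gr)$ indexed by $\nu\in I(m)$, note that the Eichler summand equals $\zeta_m^\nu/(1-\zeta_m^\nu)$ on $\Fix_\nu(\gr)$, and substitute the Lemma's formula for $|\Fix_\nu(\gr)|$. Nothing is missing.
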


\begin{say}
  Let $\sigma\colon G \rightarrow {\rm GL}(V)$ be any linear
  representation of $G$ with character $\chi_\sigma$. Denote by
  $S^2\sigma$ the induced representation on $S^2 V$ and by
  $\chi_{S^2\sigma}$ its character.  Then for $x \in G$
  \begin{equation}\label{eq_symchar}
    \chi_{S^2\sigma}  
    (x)=\frac{1}{2}\bigl (\chi_\sigma(x)^2+\chi_\sigma(x^2)\bigr).
  \end{equation}
  (See e.g. \cite[Proposition 3]{S}).

  We are only interested in the multiplicity $N$ of the trivial
  representation inside $S^2\rho$. We remark that since the
  representation $\rho$ only depends on the datum $(\mm, G, \theta)$,
  the same happens for $N$.  Using the orthogonality relations and
  \eqref{eq_symchar}, $N$ can be computed as follows:
  \begin{gather}
    \label{N}
    N= (\chi_{S^2\rho}, 1) = \frac{1}{|G|} \sum_{x \in G }
    \chi_{S^2\rho}(x) = \frac{1}{2|G|} \sum_{x \in G } \bigl (
    \chi_\rho(x^2) + \chi_\rho(x)^2 \bigr).
  \end{gather}
\end{say}

\section{Special subvarieties}
\label{Shimura-section}

\begin{say}
  \label{VHS}
  Fix a rank $2g$ lattice $\La$ and an alternating form $Q : \La
  \times \La \ra \Zeta$ of type $(1,\lds, 1)$.  For $F$ a field with
  $\QQ \subseteq F \subseteq \C$, set $\La_F : = \La\otimes_\Zeta F$.
  The Siegel upper half-space is defined as follows
  \cite[Thm. 7.4]{kempf-abelian-theta}:
  \begin{gather*}
    \sieg:= \{J \in \GL (\La_ \R) : J^2 = - I, J^* Q = Q, Q(x,Jx) >0,
    \ \forall x \neq 0 \}.
  \end{gather*}
  The group $\Sp(\La, Q)$ acts on $\sieg$ by conjugation and $\ag =
  \Sp(\La, Q) \backslash \sieg $.  This space has the structure of
  a smooth algebraic stack and also of a complex analytic orbifold. The
  orbifold structure is given by the properly
  discontinuous action of $\Sp(\La, Q) $ on $\sieg$.  Throughout the
  paper we will work with $\ag$ with this orbifold structure.  Denote
  by $A_J$ the quotient $\La_ \R / \La$ endowed with the complex
  structure $J$ and the polarization $Q$.  On $\sieg$ there is a
  natural variation of rational Hodge structure, with local system
  $\sieg \times \La _ \QQ$ and corresponding to the Hodge
  decomposition of $\La_ \C$ in $\pm i$ eigenspaces for $J$.  This
  descends to a variation of Hodge structure on $\ag$ in the orbifold
  or stack sense.
\end{say}

\begin{say}
 A special
    subvariety $\zg \subseteq\ag$ is by definition a Hodge locus of
  the natural variation of Hodge structure on $\ag$ described above.
  For the definition of Hodge
  loci for a variation of Hodge structure we refer to \S 2.3 in \cite{moonen-oort}.  
  Special subvarieties contain a dense set of CM points and they are
  totally geodesic \cite[\S 3.4(b)]{moonen-oort}. Conversely an
  algebraic totally geodesic subvariety that contains a CM point is a
  special subvariety \cite[Thm. 4.3]{moonen-linearity-1}.  In this paper we will only deal with the special varieties of
    PEL type, whose definition is as follows (see \cite[\S
  3.9]{moonen-oort} for more details). 
  Given $J\in \sieg$, set
  \begin{gather}
    \label{endq}
    \End_\QQ (A_{J}) := \{f\in \End_\QQ(\La_ \QQ): Jf=fJ\}.
  \end{gather}
  Fix a point $J_0 \in \sieg$ and set $D:= \End_\QQ (A_{J_0})$.  The
  \emph{PEL type} special subvariety $\zg (D)$ is defined as
  the image in $\ag$ of the connected component of the set $\{J \in
  \sieg: D \subseteq\End_\QQ(A_J)\}$ that contains $J_0$.
\end{say}

Recall the following results proven in \cite[Section 3]{fgp}.

   \begin{prop}
    \label{bert}
    Let $G\subseteq\Sp(\La, E)$ be a finite subgroup. Denote by
    $\sieg^G$ the set of points of $\sieg$ that are fixed by $G$. Then
    $\sieg^G$ is a connected complex submanifold of $\sieg$.
  \end{prop}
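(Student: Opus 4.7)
The plan is to exploit the structure of $\sieg$ as a complete, simply connected Hermitian symmetric space of noncompact type. Recall that $\Sp(\La_\R, Q)$ acts transitively on $\sieg$ by conjugation $g \cd J = gJg\meno$, with stabilizer at any $J$ a maximal compact subgroup isomorphic to $\U(g)$. In particular $G \subseteq \Sp(\La, Q)$ acts on $\sieg$ by holomorphic isometries for the standard symmetric Riemannian metric, which has nonpositive sectional curvature; hence $\sieg$ is a Hadamard (in particular CAT$(0)$) manifold on which $G$ acts by holomorphic isometries.

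First I would dispose of non-emptiness. Since $G$ is finite, it is compact, so it is contained in some maximal compact subgroup of $\Sp(\La_\R, Q)$. Any such maximal compact subgroup is the isotropy group of some $J_0 \in \sieg$, and therefore $G$ fixes $J_0$. (Equivalently, one can invoke the Cartan fixed point theorem, which provides a fixed point for any finite group of isometries of a complete CAT$(0)$ manifold.)

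Next, for connectedness I would show that $\sieg^G$ is geodesically convex. Given two points $J_0, J_1 \in \sieg^G$, the unique geodesic joining them is preserved setwise by each $g \in G$, since $g$ is an isometry fixing the endpoints; by uniqueness of geodesics with prescribed endpoints in a CAT$(0)$ space, $g$ must fix the geodesic pointwise. Hence the whole geodesic lies in $\sieg^G$, so $\sieg^G$ is geodesically convex, and in particular connected.

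For the remaining statement, I would appeal to the standard linearization theorem for compact (here finite) smooth group actions: $\sieg^G$ is a closed smooth submanifold of $\sieg$ whose tangent space at a fixed point $J$ equals $(T_J \sieg)^G$. Because $G$ acts holomorphically, the complex structure on $T_J \sieg$ commutes with the isotropy representation, so $(T_J \sieg)^G$ is a $\C$-linear subspace. The induced almost complex structure on $\sieg^G$ is then the restriction of the integrable one on $\sieg$, hence itself integrable, showing that $\sieg^G$ is a complex submanifold. I do not foresee a real obstacle in this plan: all ingredients (CAT$(0)$ geometry of Hermitian symmetric spaces, containment of compact subgroups in maximal ones, linearization of smooth finite-group actions) are classical, and the proof is essentially an assembly of these facts.
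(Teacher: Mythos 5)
Your argument is correct and is essentially the proof given in the reference the paper cites for this statement (\cite[Section 3]{fgp}): there too one uses that $\sieg$ is a Hermitian symmetric space of noncompact type, hence a Hadamard manifold on which the finite group $G$ acts by holomorphic isometries, so that the fixed locus is nonempty (Cartan fixed point theorem), geodesically convex and therefore connected, and a totally geodesic complex submanifold. The present paper states the proposition without reproducing that proof, so there is nothing further to compare.
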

       Set
  \begin{gather}
    D_G:=\{ f\in \End_\QQ (\La_ \QQ) : Jf=fJ, \ \forall J \in
    \sieg^G\}.
    \label{def-DG}
  \end{gather}

\begin{lemma}
  If $J \in \sieg^G$, then $D_G \subseteq\End_\QQ (A_J)$ and the
  equality holds for $J$ in a dense subset of $\sieg^G$.
\end{lemma}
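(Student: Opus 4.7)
The forward inclusion $D_G \subseteq \End_\QQ(A_J)$ for any $J \in \sieg^G$ is immediate from the definitions: if $f \in D_G$ then by definition $J' f = f J'$ for every $J' \in \sieg^G$, in particular for $J$ itself, so $f \in \End_\QQ(A_J)$. The content of the lemma is the reverse inclusion on a dense subset, which I would prove by a Baire category argument.

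For each $f \in \End_\QQ(\La_\QQ)$, set
$$V_f := \{J \in \sieg^G : J f = f J\}.$$
The condition $Jf = fJ$ is $\R$-linear in $J$, so $V_f$ is a closed real-analytic subset of $\sieg^G$; and by the very definition \eqref{def-DG} of $D_G$, $V_f = \sieg^G$ holds precisely when $f \in D_G$. Hence for $f \notin D_G$, the set $V_f$ is a proper closed analytic subset of the connected complex submanifold $\sieg^G$ (Proposition \ref{bert}), and in particular nowhere dense. Since $\End_\QQ(\La_\QQ)$ is a finite-dimensional $\QQ$-vector space, it is countable as a set, so
$$U := \sieg^G \setminus \bigcup_{f \notin D_G} V_f$$
is the complement in the complex manifold $\sieg^G$ of a countable union of nowhere-dense closed subsets. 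By Baire's theorem $U$ is dense in $\sieg^G$, and for any $J \in U$ no $f \notin D_G$ satisfies $Jf = fJ$, hence $\End_\QQ(A_J) \subseteq D_G$; together with the first inclusion this yields equality on $U$.

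The only step with any substance is the properness of each $V_f$ for $f \notin D_G$, but this is tautological from \eqref{def-DG}. The rest — closedness and analyticity of $V_f$ from $\R$-linearity of the defining condition, nowhere-denseness of proper closed analytic subsets of a connected complex manifold, countability of $\End_\QQ(\La_\QQ)$, and Baire category on the complex manifold $\sieg^G$ guaranteed by Proposition \ref{bert} — is standard and entirely general, so the argument should be short.
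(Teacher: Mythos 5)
Your proof is correct: the forward inclusion is indeed tautological, and the reverse inclusion on a dense set follows exactly as you say from the countability of $\End_\QQ(\La_\QQ)$, the fact that for $f\notin D_G$ the commutator condition cuts out a proper closed analytic (here even linear) subset of the connected manifold $\sieg^G$, and Baire's theorem. The paper itself gives no proof here but defers to \cite{fgp}, where the argument is essentially this same countability-plus-genericity one, so your proposal matches the intended approach.
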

\begin{prop}
  \label{prop-Bert}
  The image of $\sieg^G$ in $\ag$ coincides with the PEL subvariety
  $\zg (D_G)$.
\end{prop}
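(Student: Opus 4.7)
The plan is to reduce the statement to a set-theoretic identity in $\sieg$, namely
\[
\sieg^G \;=\; \{J \in \sieg : D_G \subseteq \End_\QQ(A_J)\}.
\]
Once this is established, Proposition \ref{bert} ensures that the left-hand side is connected, so it coincides with the connected component containing any chosen basepoint $J_0 \in \sieg^G$. Taking the image under the quotient $\sieg \to \ag$ then yields the image of $\sieg^G$ on the left and, by definition, the PEL subvariety $\zg(D_G)$ on the right.

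The inclusion $\subseteq$ of the displayed identity is immediate from the preceding lemma, which says exactly that $D_G \subseteq \End_\QQ(A_J)$ whenever $J \in \sieg^G$. For the reverse inclusion my key observation will be that $G$ itself sits inside $D_G$, once $G$ is regarded as a subset of $\End_\QQ(\La_\QQ)$ via $G \subseteq \Sp(\La, Q) \subseteq \GL(\La_\QQ)$. Indeed, $J \in \sieg^G$ means $gJ = Jg$ for every $g \in G$, and since this commutation relation holds uniformly for every $J \in \sieg^G$, the defining property \eqref{def-DG} of $D_G$ gives $g \in D_G$. Consequently, if $D_G \subseteq \End_\QQ(A_{J'})$ for some $J' \in \sieg$, then in particular every $g \in G$ commutes with $J'$, which is precisely the condition $J' \in \sieg^G$.

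Overall the argument is mostly bookkeeping once the inclusion $G \subseteq D_G$ is noticed, so I do not expect a serious obstacle. The one mild subtlety to keep track of is the choice of basepoint in the PEL definition: one must pick $J_0 \in \sieg^G$ so that the connected component distinguished in the definition of $\zg(D_G)$ is precisely $\sieg^G$, and not some other component of the locus $\{J : D_G \subseteq \End_\QQ(A_J)\}$ — an issue that the displayed equality above makes moot, since the two loci are shown to coincide as sets.
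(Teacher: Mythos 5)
Your proof is correct and is essentially the same argument as the one the paper relies on (it cites \cite[Section 3]{fgp} rather than reproducing it): the key observation in both is that $G\subseteq D_G$, so that $\{J\in\sieg : D_G\subseteq \End_\QQ(A_J)\}=\sieg^G$, and then connectedness from Proposition \ref{bert} identifies this set with the distinguished component. The only point worth making explicit is that the density clause of the preceding lemma is what supplies a basepoint $J_0\in\sieg^G$ with $D_G=\End_\QQ(A_{J_0})$, so that $\zg(D_G)$ is actually a PEL subvariety in the sense of the definition given.
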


\begin{lemma}
  \label{dimensione}
  If $J \in \sieg^G $, then $\dim \sieg^G = \dim \zg(D_G) = \dim (S^2
  \La _\R)^G$ where $\La_\R$ is endowed with the complex structure
  $J$.
\end{lemma}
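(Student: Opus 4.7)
The plan is to prove the two equalities separately. The first is essentially formal from Proposition~\ref{prop-Bert}, while the second reduces to a tangent-space computation using Proposition~\ref{bert}.

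For $\dim \sieg^G = \dim \zg(D_G)$: By Proposition~\ref{prop-Bert}, $\zg(D_G)$ is the image of $\sieg^G$ under the quotient map $\pi\colon \sieg \to \ag$, which is a local biholomorphism (in the orbifold sense) because $\Sp(\La, Q)$ acts properly discontinuously on $\sieg$. This gives the equality directly.

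For $\dim \sieg^G = \dim (S^2 \La_\R)^G$: By Proposition~\ref{bert}, $\sieg^G$ is a connected complex submanifold of $\sieg$, so $\dim_\C \sieg^G = \dim_\C (T_J \sieg)^G$, where the identification $T_J\sieg^G = (T_J\sieg)^G$ uses that $G$ acts by biholomorphisms fixing $J$. It then suffices to produce a $G$-equivariant isomorphism of complex $G$-representations $T_J \sieg \cong S^2(\La_\R, J)$, where $(\La_\R, J)$ denotes $\La_\R$ endowed with the complex structure $J$. To construct it, I would differentiate the equations cutting out $\sieg$ inside $\End(\La_\R)$ to obtain
$$T_J \sieg = \{A \in \End(\La_\R) : AJ + JA = 0,\ Q(Ax, Jy) + Q(Jx, Ay) = 0\}.$$
The first condition makes $A$ antilinear on $(\La_\R, J)$; combined with the skew-symmetry of $Q$, the second is equivalent to the real bilinear form $B_A(x,y) := Q(Ax, Jy)$ being symmetric and satisfying $B_A(Jx,y) = B_A(x, Jy)$. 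Such a form extends uniquely to a $\C$-bilinear symmetric form on $(\La_\R, J)$, so $A \mapsto B_A$ is an $\R$-linear isomorphism $T_J \sieg \cong S^2(\La_\R, J)^*$ between complex vector spaces of the same complex dimension $g(g+1)/2$. It is $G$-equivariant because $G \subset \Sp(\La, Q)$ preserves $Q$ and commutes with $J$ (the latter since $J \in \sieg^G$). A further $G$-equivariant pairing — for instance the positive definite inner product $g_J(\cdot, \cdot) := Q(\cdot, J\cdot)$ — identifies $S^2(\La_\R, J)^*$ with $S^2(\La_\R, J)$ as $G$-modules. Taking $G$-invariants on both sides concludes the argument.

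The main obstacle is the tangent-space identification in the second part: one must verify both the symmetry and $J$-invariance property of $B_A$ and the $G$-equivariance of the resulting map, tracking the complex structures carefully (the map is $\C$-linear up to a sign convention, but in any case gives an isomorphism of $\R[G]$-modules of the same complex dimension, which is all that is needed for the dimension count). Everything else is a formal consequence of Propositions~\ref{bert} and~\ref{prop-Bert} together with the orbifold quotient structure of $\ag$.
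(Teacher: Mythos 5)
Your proof is correct and follows the standard route: identify $T_J\sieg$ with symmetric $\C$-bilinear forms on $(\La_\R,J)$ via $A\mapsto Q(A\cdot,J\cdot)$, take $G$-invariants using $T_J(\sieg^G)=(T_J\sieg)^G$, and note that the projection to $\ag$ has discrete fibres. The paper itself gives no proof here (it recalls the lemma from \cite[Section 3]{fgp}), and the argument there is essentially the same tangent-space computation, so there is nothing substantive to flag.
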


Recall that $N = \dim \left ( S^2H^0(C,K_C)\right )^G $ and that
$\zg(\mm, G, \theta)$ is defined in \ref{family}.

\begin{teo}
  \label{criterio}
  Fix a datum $\datum$ and assume that
  \begin{gather}
  \label{bona}
    \tag{$\ast$} N = 3g'-3 +r.
  \end{gather}
  Then $\zg (\mm, G, \theta)$ is a special subvariety of PEL type of
  $\ag$ that is contained in $\tor$ and such that $\zg(\mm, G,
  \theta)\cap \jac \neq \vacuo$.
\end{teo}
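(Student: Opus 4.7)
The plan is to adapt verbatim the argument of \cite{fgp} for the case $g'=0$: I will show that $\zg(\mm,G,\theta)$ coincides with the PEL subvariety $\zg(D_G)$ for a suitable embedding $G \hookrightarrow \Sp(\La,Q)$. Once the inclusion $\zg(\mm,G,\theta) \subseteq \zg(D_G)$ is established, condition $(*)$ will force equality on dimensional grounds, and the PEL/special nature of $\zg(\mm,G,\theta)$ will then follow from Proposition \ref{prop-Bert}.

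First I would pick a reference curve $C_0$ in the family together with a symplectic identification $H_1(C_0,\Zeta) \cong \La$. The $G$-action on $C_0$ preserves the intersection form and yields a monomorphism $G \hookrightarrow \Sp(\La,Q)$; the complex structure $J_0$ on $\La_\R$ coming from the Jacobian of $C_0$ is $G$-invariant, hence $J_0 \in \sieg^G$. Because the universal family on $\mathcal{T}_g^G$ carries a fibrewise $G$-action and the local system $H_1$ is trivial on Teichm\"uller space, every $[C] \in \mathcal{T}_g^G$ produces a complex structure on $\La_\R$ fixed by the same subgroup $G \subseteq \Sp(\La,Q)$. Passing to the quotient $\ag = \Sp(\La,Q) \backslash \sieg$ and taking closures yields the inclusion $\zg(\mm,G,\theta) \subseteq \zg(D_G)$ in view of Proposition \ref{prop-Bert}.

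Next I would compare dimensions. By Lemma \ref{dimensione}, $\dim \zg(D_G) = \dim (S^2 \La_\R)^G$, which under the Hodge decomposition $\La_\C = H^{1,0} \oplus H^{0,1}$ at $J_0$ is identified $G$-equivariantly with the complex dimension of $(S^2 H^0(C_0,K_{C_0}))^G$, i.e.\ with $N$. On the other hand, by the construction of \S\ref{family} the subvariety $\zg(\mm,G,\theta)$ has dimension $3g'-3+r$, and hypothesis $(*)$ identifies these two numbers. Since $\sieg^G$ is connected by Proposition \ref{bert}, its image $\zg(D_G)$ is irreducible, and $\zg(\mm,G,\theta)$ is irreducible as the closure of the image of the connected space $\mathcal{T}_g^G \cong \mathcal{T}_{g',r}$. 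Two irreducible subvarieties of $\ag$ of the same dimension, one contained in the other, must coincide; hence $\zg(\mm,G,\theta) = \zg(D_G)$ is a PEL special subvariety. The inclusion $\zg(\mm,G,\theta) \subseteq \tor$ and the non-emptiness of $\zg(\mm,G,\theta) \cap \jac$ are immediate from the construction of $\zg(\mm,G,\theta)$ as the Torelli image of Jacobians of smooth curves. The only step requiring any real care is the $G$-equivariant identification of the holomorphic tangent space to $\ag$ at $J_0$ with $(S^2 H^0(C_0, K_{C_0}))^*$, but this is standard Hodge theory and is exactly what underlies Lemma \ref{dimensione}, so nothing new needs to be proved beyond what is already recorded in Section \ref{Shimura-section}.
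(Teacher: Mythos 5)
Your proposal is correct and follows essentially the same route as the paper's own proof: establish the inclusion $\zg(\mm,G,\theta)\subseteq\zg(D_{G'})$ for the (topologically determined) image $G'$ of $G$ in $\Sp(\La,Q)$, then use irreducibility of $\zg(D_{G'})$ together with Lemma \ref{dimensione} and the identification of $(S^2\La_\R)^{G'}$ with the dual of $(S^2H^0(C,K_C))^G$ to conclude equality from condition $(*)$. The only cosmetic difference is that you justify the independence of the symplectic embedding of $G$ via triviality of the $H_1$ local system on Teichm\"uller space, where the paper simply remarks that the embedding is purely topological.
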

\begin{proof}
  Let $\mathcal{C} \ra {\mathcal T}_g^G$ be the universal family as in \ref{family}.  For any
  $t\in {\mathcal T}_g^G$, $G$ acts holomorphically on $C_t$, so it maps
  injectively into $\Sp(\La, Q)$, where $\La = H_1(C_t,\Zeta)$ and $Q$
  is the intersection form.  Denote by $G'$ the image of $G$ in
  $\Sp(\La, Q)$. It does not depend on $t$ since it is purely
  topological.  Recall that the Siegel upper half-space $\sieg$ parametrizes
  complex structures on the real torus $\La _ \R / \La = H_1(C_t, \R)
  / H_1(C_t, \Zeta)$ which are compatible with the polarization $Q$.  The period
  map associates to the curve $C_t$ the complex structure $J_t$ on
  $\La _ \R$ obtained from the splitting $ H^1(C_t, \C) = H^{1,0}(C_t)
  \oplus H^{0,1}(C_t) $ and the isomorphism $H_1(C_t, \R)^* _ \C =
  H^1(C_t, \C)$.  The complex
  structure $J_t$ is invariant by $G'$, since the group $G$ acts holomorphically on $C_t$. This shows that $J_t \in
  \sieg^{G'}$, so the Jacobian $j(C_t) $ lies in $\zg(D_{G'})$. This
  shows that $\zg (\mm, G, \theta) \subseteq \zg(D_{G'})$. Since
  $\zg(D_{G'})$ is irreducible (see e.g. Proposition \ref{bert}), to
  conclude the proof it is enough to check that they have the same
  dimension. The dimension of $\zg(\mm, G, \theta)$ is $3g'-3 +r$, see \ref{family}.  By
  Lemma \ref{dimensione}, if $J \in \sieg^{G'}$, then $\dim
  \zg(D_{G'}) = \dim\sieg^{G'} = \dim ( S^2 \La _ \R ) ^{G'}$, where
  $\La _ \R$ is endowed with the complex structure $J$.  If $J$
  corresponds to the Jacobian of a curve $C$ in the family, then $(
  S^2 \La _ \R ) ^{G'}$ is isomorphic to the dual of $(S^2 H^0(C,
  K_C))^G$.  Thus $\dim \zg(D_{G'} )= N$ and \eqref{bona} yields the
  result.
\end{proof}

\section{Examples of special subvarieties in the Torelli locus}

\label{examples-section}

\begin{say}
  \label{list}
  In this section we prove Theorem \ref{main} and Theorem \ref{xiaointro}. First we give the list of all the  families of Galois
  covers of a curve of genus 1 satisfying property $(*)$, hence giving rise to special subvarieties of $\ag$. 
  For them we now give a presentation of the Galois group $G$ and an
  explicit description of a representative of an epimorphism $$\theta: \Gamma_{1, r} = \langle \alpha, \beta, \gamma_1,... ,\gamma _r   \ | \ \gamma_1...\gamma_r  \alpha\beta\alpha^{-1} \beta^{-1} = 1 \rangle \rightarrow G$$

  (we use the same notation as in \S
\ref {theo: Riemann} and \S \ref {say-ssg}).

\bigskip
  \begin{center}
    Genus $2$
  \end{center}
  \smallskip

  \begin{itemize}[labelindent=\parindent,leftmargin=*]
  \item [(1)]  $G= \Zeta/2\Zeta = \sx z \ | \ z^2 = 1\xs$.   \\
  $\mm=(2, 2)$
    $\theta : \Gamma_{1, 2} \ra \Zeta/2\Zeta$,\\
    $\theta(\gamma_1) = \theta(\gamma_2)= z$, $\theta(\alpha) = \theta(\beta) = 1$.

   \end{itemize}

  \smallskip
  \begin{center} {Genus $3$}
  \end{center}
 
  \smallskip

\begin{itemize}[labelindent=\parindent,leftmargin=*]
  \item [(2)]  $G= \Zeta/2\Zeta = \sx z \ | \  z^2 = 1\xs$.   \\
  $\mm=(2, 2,2,2)$
    $\theta : \Gamma_{1, 4} \ra \Zeta/2\Zeta$,\\
    $\theta(\gamma_i) = z$, $\forall i =1,...,4$, $\theta(\alpha) = \theta(\beta) = 1$. 
    
      \item[(3)]  $G= \Zeta/3\Zeta = \sx z \ | \ z^3 = 1\xs$.   \\
  $\mm=(3, 3)$
    $\theta : \Gamma_{1, 2} \ra \Zeta/3\Zeta$,\\
    $\theta(\gamma_1) = z,  \theta(\gamma_2)= z^2$, $\theta(\alpha) = \theta(\beta) = 1$. 

 \item[(4)]  $G= \Zeta/4\Zeta = \sx z \ | \  z^4 = 1\xs$.   \\
  $\mm=(2, 2)$
    $\theta : \Gamma_{1, 2} \ra \Zeta/4\Zeta$,\\
    $\theta(\gamma_1) =  \theta(\gamma_2)= z^2$, $\theta(\alpha) = \theta(\beta) = 1$. 

 \item[(5)]  $G = Q_8 = \langle g_1, g_2, g_3 : g_1 ^2 = g_2 ^2 = g_3, g_3 ^2 = 1, g_1 ^{-1} g_2 g_1 = g_2 g_3 \rangle$ \\
  $\mm=(2)$
    $\theta : \Gamma_{1, 1} \ra Q_8$,\\
    $\theta(\gamma_1) = g_3$,  $\theta(\alpha) =g_2$,  $\theta(\beta) = g_1$. 

   \end{itemize}

 \smallskip
  \begin{center} {Genus $4$}
  \end{center}
 
  \smallskip

\begin{itemize}[labelindent=\parindent,leftmargin=*]
  \item [(6)]  $G= \Zeta/3\Zeta = \sx z \ | \ z^3 = 1\xs$.   \\
  $\mm=(3, 3,3)$
    $\theta : \Gamma_{1, 3} \ra \Zeta/3\Zeta$,\\
    $\theta(\gamma_i) = z$, $\forall i =1,2,3$, $\theta(\alpha) = \theta(\beta) = 1$. 
\end{itemize}

\end{say}
\begin{say}\label{say_HWMoves}

First of all we will show that in each of the above cases, once we fix the group $G$ and the ramification $\mm$, all the possible data $(\mm, G, \theta)$ belong to the same Hurwitz equivalence class and hence give rise to the same subvariety of $\ag$. Let us prove this for all the cases, giving the details only for some cases, being the others very similar. 

Case (1).  The group $\Zeta/2\Zeta\cong\langle z\rangle$ has only one element of order $2$, this forces  $\theta(\gamma_i)$ to be equal to $z$ for $i=1,2$. Recall that the action of the mapping class group ${\rm Map}_{1,[2]}$  on $\Gamma_{1, 2}$ is generated (up to inner automorphism) by the seven moves described in \cite[Proposition 1.14]{penegini2013surfaces} -- from where we also borrow the notation. In particular, since $\Zeta/2\Zeta$ is abelian, the move induced by $t_{\xi^1_{1,1}}$  is given by $\theta(\alpha) \mapsto \theta( \gamma_1 \alpha)$ and the identity on the other generators,  while  the move  induced by $t_{\xi^2_{1,1}}$ is given by $\theta(\beta) \mapsto \theta(\gamma_1\beta)$ and the identity on the other generators. This gives that the systems of generators $\langle \theta(\alpha), \theta(\beta); \, \theta(\gamma_1), \theta(\gamma_2)\rangle= \langle z,z; \, z,z \rangle$ and $\langle 1,1; \, z,z \rangle$ are Hurwitz equivalent. In addition the moves $t_{\delta_1}$   is given by $\alpha \mapsto \alpha\beta^{-1}$ and the identity on the other generators, and $t_{\tilde{\delta}_1}$   is given by $\beta \mapsto \beta\alpha$ and the identity on the other generators. This moves yields at once that  systems of generators $\langle z,z; \, z,z \rangle$,  $\langle 1,z; \, z,z \rangle$, $\langle z,1; \, z,z \rangle$  are Hurwitz equivalent. 

The proof for the cases (2) and (4) is the same as the one for case (1) with obvious changes.  

Case (3). Since $\Zeta/3\Zeta$ is abelian its commutator subgroup is trivial, thus  up to automorphisms we can choose $\theta(\gamma_1)=z$ and $\theta(\gamma_2)=z^2$. Then we proceed as in case (1) with obvious changes. Notice that this proof can be adjusted to fit for case (6).

Case (5). Since there is only one element of order $2$ in $Q_8$, $\theta(\gamma_1)=g_3$. Up to simultaneous conjugation the image of the pair  $(\alpha, \, \beta)$ by $\theta$ is one of the following $(g_1, \, g_2)$, $(g_2, \, g_1)$,  $(g_1, \, g_1g_2)$, $(g_1g_2, \, g_1)$, $(g_1g_2, \, g_2)$ and $(g_2, \, g_1g_2)$.   Using only the moves  $t_{\delta_1}$ and  $t_{\tilde{\delta}_1}$,  described above, and the automorphisms of $Q_8$ we see that all the pairs are equivalent to $(g_2, \, g_1)$. Therefore all the systems of generators are Hurwitz equivalent to $\langle g_2, g_1; \, g_3 \rangle$. Notice that this proof can be found also in \cite[Proposition 5.9]{matteo2011}. Indeed, in that article this very covering is used to construct a new surface of general type with $p_g=q=2$.  

\end{say}
\begin{say}
\label{paragone}
We will now show that the families listed above do in fact verify condition $(*)$ and hence yield special subvarieties of $\ag$. Nevertheless we will show that only two of them, namely family  $(2)$ and $(6)$ give rise to new special subvarieties of $\ag$, while the others have already been obtained as families of Galois covers of $\PP^1$. 

Let us explain this. Assume that a family of Galois coverings of genus 1 curves with Galois group $G$ satisfying $(*)$ yields the same Shimura subvariety of $\ag$ of dimension $s$ as one of those obtained via a family of Galois coverings of $\PP^1$ satisfying condition $(*)$. Then each covering $\phi:X \rightarrow X/G$ of the family of covers of genus one curves has the property that the curve $X$ also admits an action of a group $K \subset Aut(X)$ such that $X/K \cong \PP^1$ and we have: $dim(S^2H^0(K_X)^G) = dim(S^2H^0(K_X)^K) = s$.  So each curve $X$ of the family admits an action of a group $\tilde{G} \subset Aut(X)$ containing both $G$ and $K$ such that $X/\tilde{G} \cong \PP^1$ and since $S^2H^0(K_X)^{\tilde{G}} \subset S^2H^0(K_X)^K$, also the family of coverings $\psi: X \rightarrow X/\tilde{G} \cong \PP^1$ satisfies condition $(*)$ and we have the following commutative diagram:

\begin{equation}
\label{factorization} 
\xymatrix{ X \ar[r]^{\phi} \ar[rd] _{\psi} & X/G \ar[d] ^{\sigma} \\
              & X/\tilde{G} \cong \mathbb{P}^1
}
\end{equation}
So we can assume that $G \subset K$.

Since all the families of Galois coverings of $\PP^1$ satisfying $(*)$ in genus less than 10 have already been found in \cite{fgp} Table 2, it will suffice to compare our families with the ones listed there.
\end{say}

\begin{say}
Example $(2)$. 
Clearly $\theta$ is an epimorphism. 
Now we want to show that the sufficient condition $ (*)$ is satisfied, so we compute the number $N$  using \eqref{N}.  Eichler trace formula \eqref{carg} immediately yields $\chi_{\rho} (z) = -1$, and since $\chi_{\rho}(1) =  g = 3$ we have $N= 4$, which coincides with the number of critical values, and so with the dimension of the family. This proves that our family of Galois coverings is special.

Since the family just constructed has dimension $4$, it has  bigger dimension than any possibile family given as Galois covering of $\mathbb{P}^1$ satisfying $(*)$. In fact looking at the table of all possible special varieties presented as Galois coverings of $\mathbb{P}^1$ satisfying $(*)$  of genus $g \leq 9$ we see that none of these has dimension greater than $3$ (see Table 2 of \cite{fgp}). This proves that the family gives a new special subvariety contained in the Torelli locus. It also follows that the family is not contained in the hyperelliptic locus. In fact, if every curve $C$ of the family were hyperelliptic, one could consider the group $H$ generated by $G$ and the hyperelliptic involution $\sigma$. The quotient $C/H \cong \PP^1$, so we would obtain a family of Galois coverings of $\PP^1$ which clearly still verifies condition $(*)$, and this does not exist, as we just pointed out.

\end{say}

\begin{say}
Example $(6)$. 
Clearly $\theta$ is an epimorphism. 

Eichler trace formula \eqref{carg} immediately yields $\chi_{\rho} (z) = \zeta_3$, $\chi_{\rho} (z^2) = \bar{\zeta}_3$ and since $\chi_{\rho}(1) =  g = 4$, by \eqref{N}  we have $N= 3$, which coincides with the number of critical values, and so with the dimension of the family. This proves that our family is special.

As noted before, to conclude we have to check that the family does not yield the same Shimura subvariety of $\ag$ as one obtained via a family of Galois coverings of $\mathbb{P}^1$ already known to be special. Nonetheless, looking at Table 2 in \cite{fgp} one checks there are no families of Galois covers of $\mathbb{P}^1$ satisfying condition $ (*)$ with dimension greater or equal than $3$ admitting $\mathbb{Z}/3\mathbb{Z}$ as a proper subgroup of the Galois group. By \eqref{paragone}, this proves  that the family  gives a new special subvariety contained in the Torelli locus. It also follows by the same argument as in the previous example that it is not contained in the hyperelliptic locus.

\begin{remark}
We observe that this family is interesting also for another reason, in fact it is the same family used by Pirola in \cite{pietroxiao} to construct a counterexample to a conjecture of Xiao on the relative irregularity of a fibration of a surface on a curve. 

\end{remark}
\end{say}

\begin{say}
Example $(1)$. Clearly $\theta$ is an epimorphism. 

By Eichler trace formula  \eqref{carg} we find $\chi_{\rho} (z) = 0$, and using \eqref{N}   we obtain $N= 2$, which coincides with the number of critical values, and so with the dimension of the family, therefore  our family  is special.

We will now show that this family yields the same subvariety in $\ag$ as family $(26)$ in Table 2 of \cite{fgp} (this family was already found in \cite{moonen-oort}).

Let us recall the description of this family. It is a family of Galois coverings of $\PP^1$ with Galois group $\tilde{G} = \mathbb{Z}/2\mathbb{Z} \times \mathbb{Z}/2\mathbb{Z} = \langle x,y |  \, x^2 = y^2 = 1, \, x y = y x \rangle $, with ramification data $(2,2,2,2,2)$ and with epimorphism   $\tilde{\theta} : \Gamma_{0,5}= \langle \delta_1,..., \delta_5 \ | \ \prod_{i=1,...,5} \delta_i= 1 \rangle \rightarrow \tilde{G}$
given by 
$$\tilde{\theta}  (\delta _1 ) = x, \;\tilde{\theta} (\delta _2) = x, \; \tilde{\theta}  (\delta _3) = x, \;\tilde{\theta}  (\delta _4) = y, \;\tilde{\theta}  (\delta_5) = xy.$$

We want to prove that for any covering of this family $\psi : X \rightarrow X/\tilde{G} \cong \mathbb{P}^1$, $X$ also admits an action by  a subgroup $G \cong \Zeta/2\Zeta$ of $\tilde{G}$   such that  the quotient map $\phi: X \rightarrow E \cong X/G$,  belongs our family $(1)$  and we have a diagram as in \eqref{factorization}. 

Consider the cyclic subgroup $G \cong \langle y | \, y ^2 = 1 \rangle < \tilde{G}$.

Looking at ramification data, we see that all stabilizer subgroups of $\tilde{G}$ have order $2$ and looking at the epimorphism $\tilde{\theta}$  we see that the stabilizer subgroup associated to the fourth branch point $q_4$ is $\langle y 
\rangle = G$. This implies that points in $\psi ^{-1}(q_4) = \lbrace p_1, p_2 \rbrace$ are critical points for the action of $G$ as well. Moreover they cannot belong to the same fiber with respect to the action of $G$ since every element of $G$ stabilizes both $p_1$ and $p_2$. To conclude note that every other stabilizer subgroup for the critical points of $\psi$ does not contain any non trivial element of $G$, so $q_4$ is the only branch point of $\phi$. This proves that the map $\phi$ has exactly $2$ critical values which are the images of $p_1$ and $p_2$ by $\phi$ and the ramification is $\mm=(2,2)$. So we have a family with the same group and the same ramification as in family $(1)$ and hence by the unicity argument given in \ref{say_HWMoves} we conclude that it gives the same special subvariety in $\ag$ as the one given by family $(1)$.

Concluding, the special subvariety given by family $(1)$ gives the same special subvariety obtained as a family of Galois coverings of $\mathbb{P}^1$ via $\mathbb{Z}/2\mathbb{Z} \times \mathbb{Z}/2\mathbb{Z}$ corresponding to family $(26)$ of Table 2 of \cite{fgp} and already studied in \cite{moonen-oort}.

\end{say}

\begin{say}
Example (3).  It is clear that $\theta$ is an epimorphism. Using  Eichler trace formula we immediately obtain $\chi_{\rho}(z) = \chi_{\rho}(z^2)=0$ and by  \eqref{N}   we obtain $N= 2$, which coincides with the number of critical values, and so with the dimension of the family, therefore the family  is special.

We claim that this family yields the same Shimura subvariety of $\ag$ as family $(31)$  in Table 2 of \cite{fgp}. Let us describe this family of Galois coverings of $\PP^1$ as in 4.1 of \cite{fgp}. The Galois group $\tilde{G}$ is isomorphic to the symmetric group $S_3$, $\tilde{G} = \langle x,y |  \, y^2 = x ^3 = 1, \, y ^{-1} x y = x^2 \rangle$,  $\mm=(2,2,2,2,3)$ and the epimorphism $\tilde{\theta}: \Gamma_{0,5} = \langle \delta_1,..., \delta_5 \ | \ \prod_{i=1,...,5} \delta_i= 1 \rangle \rightarrow \tilde{G}$
is given by 
$$\tilde{\theta}  (\delta _1 ) = xy, \;\tilde{\theta} (\delta _2) = x^2y, \; \tilde{\theta}  (\delta _3) = y, \;\tilde{\theta}  (\delta _4) = xy, \;\tilde{\theta}  (\delta_5) = x^2.$$
We will show that every covering $\psi:X \rightarrow X/ \tilde{G} \cong \PP^1$ of this family  also  admits a $G=\Zeta/3\Zeta$-action such that $X/G$ has genus 1, the map $\phi: X \rightarrow X/G$ is one of the coverings of our family (3) and we have a factorisation as in \eqref{factorization}. 
In fact, set $G = \langle x | \, x ^3 = 1 \rangle < \tilde{G}$ that is the only cyclic subgroup of order 3 on $\tilde{G}$. Looking at the stabilisers of the action of $\tilde{G}$, we see that the two critical points of $\psi$ in the fibre over the critical value $q_5$ have both $G$ as stabiliser, hence they are critical points also for the action of $\phi$ and they are mapped by $\phi$ in two different critical values. All the other critical points of $\psi$ have stabilisers of order 2, hence they are not critical values for the map $\phi$. So the map $\phi$ has ramification $(3,3)$ and by the unicity argument \ref{say_HWMoves} we can assume that $\phi: X \rightarrow X/G$ belongs to our family $(3)$. 

Concluding, the special subvariety given by family $(3)$ gives the same special subvariety obtained as the family  $(31)$ of Galois coverings of $\mathbb{P}^1$ via $S_3$ found in \cite{fgp}. This family is not contained in the hyperelliptic locus (see the proof of theorem 5.3 of \cite{fgp}). 
\end{say}

\begin{say}
Example $(4)$. 
 It is clear that $\theta$ is an epimorphism. Using  Eichler trace formula we immediately obtain $\chi_{\rho}(z) =\chi_{\rho}(z^3)  = 1$, $\chi_{\rho}(z^2) = -1$ and by  \eqref{N}   we obtain $N= 2$, which coincides with the number of critical values, and so with the dimension of the family, therefore the family  is special.

We will now show that this family yields  the same Shimura subvariety of $\ag$ as family $(32)$  in Table 2 of \cite{fgp}. Let us describe this family of Galois coverings of $\PP^1$ as in 4.1 of \cite{fgp}.

The Galois group $\tilde{G}$ is isomorphic to the dihedral  group $D_4$, $\tilde{G} = \langle x,y |  \, y^2 = x ^4 = 1, \, y ^{-1} x y = x^3 \rangle$,  $\mm=(2,2,2,2,2)$ and the epimorphism $\tilde{\theta}: \Gamma_{0,5} = \langle \delta_1,..., \delta_5 \ | \ \prod_{i=1,...,5} \delta_i= 1 \rangle \rightarrow \tilde{G}$ is 
given by 
$$\tilde{\theta}  (\delta _1 ) = xy, \;\tilde{\theta} (\delta _2) = x^2y, \; \tilde{\theta}  (\delta _3) = x^2, \;\tilde{\theta}  (\delta _4) = x^2y, \;\tilde{\theta}  (\delta_5) = x^3y.$$

As above we want to show that every covering $\psi:X \rightarrow X/ \tilde{G} \cong \PP^1$ of this family  also  admits a $G=\Zeta/4\Zeta$-action such that $X/G$ has genus 1, the map $\phi: X \rightarrow X/G$ is one of the coverings of our family (4) and we have a factorisation as in \eqref{factorization}.

We can identify $G \cong \langle x | \, x ^4 = 1 \rangle < \tilde{G}$.

The stabilizer subgroups for the action of $\tilde{G}$ of the critical points over  the third branch point $q_3$ are all given by the center $H = \langle x^2 \rangle$ of $\tilde{G}$ which is contained in $G$. This implies that points in $\psi ^{-1}(q_3) = \lbrace p_1, p_2, p_3, p_4 \rbrace$ are critical points for the action of $G$ as well.

Morover the four points $p_1,..,p_4$ are partitioned in exactly two orbits for the action of $G$, hence they give rise to two critical values of the map $\phi$. Finally, observing that the other $4$ conjugacy classes of stabilizers for $D_4$ do not contain nontrivial elements belonging to $G$, we conclude that the action of $G< D_4$ has ramification data $(2,2)$ and by \ref{say_HWMoves} we can assume that it gives a covering belonging to our family $(4)$. 

Concluding, the special variety given by family $(4)$ gives the same special variety obtained as the family $(32)$ of Galois coverings of $\mathbb{P}^1$ via $D_4$ found in \cite{fgp}. This family is not contained in the hyperelliptic locus (see the proof of theorem 5.3 of \cite{fgp}).

\end{say}

\begin{say}
Example (5). One easily checks that $\theta$ is an epimorphism. Using  Eichler trace formula we find that the trace of every non zero element different from $g_3$ is equal to $1$, and that $\chi_{\rho} (g_3) = -1$.  By  \eqref{N}   we obtain $N= 1$, which coincides with the number of critical values, and so with the dimension of the family, therefore the family  is special.

We will now show that this family yields  the same Shimura subvariety of $\ag$ as family $(34)$  in Table 2 of \cite{fgp}. Let us describe this family of Galois coverings of $\PP^1$ as in 4.1 of \cite{fgp}.

The Galois group  is $$\tilde{G}=(\mathbb{Z}/4\mathbb{Z} \times \mathbb{Z}/2\mathbb{Z}) \rtimes (\mathbb{Z}/2\mathbb{Z}) \cong$$
$$\langle y_1, y_2, y_3 |  \, y_1^2 = y_2 ^2 = y_3 ^4= 1, \, y_2y_3 = y_3 y_2, 
y_1 ^{-1}y_2 y_1 = y_2 y_3 ^2, \, y_1 ^{-1}y_3 y_1 = y_3 \rangle,$$
$\mm=(2,2,2,4)$ and the epimorphism $$\tilde{\theta}: \Gamma_{0,4} = \langle \delta_1,..., \delta_4 \ | \ \prod_{i=1,...,4} \delta_i= 1 \rangle \rightarrow \tilde{G}$$
is given by 
$$\tilde{\theta}  (\delta _1 ) = y_1, \;\tilde{\theta} (\delta _2) = y_1 y_2 y_3 ^3, \; \tilde{\theta}  (\delta _3) = y_2 y_3 ^2, \;\tilde{\theta}  (\delta _4) =  y_3 ^3.$$

Observe that the conjugacy classes of the nontrivial elements of $\tilde{G}$ are: 
\begin{equation*}\begin{aligned}
& \qquad \mbox{order } 2: \; \lbrace y_1, y_3 ^2 y_1 \rbrace , \, \lbrace y_2, y_3 ^2 y_2 \rbrace , \, \lbrace y_3^2 \rbrace ,  \, \lbrace y_2 y_3 y_1, y_2 y_3 ^3 y_1 \rbrace ,\\
&\qquad \mbox{order } 4: \; \lbrace y_3 \rbrace , \, \lbrace y_3 ^3 \rbrace \, \lbrace y_2 y_3, y_2 y_3 ^3\rbrace \, \lbrace y_2 y_1, y_2 y_3^2 y_1\rbrace \, \lbrace y_3 y_1, y_3 ^3 y_1 \rbrace , \\
\end{aligned}\end{equation*}

As above we want to show that every covering $\psi:X \rightarrow X/ \tilde{G} \cong \PP^1$ of this family  also  admits a $G=Q_8$-action such that $X/G$ has genus 1, the map $\phi: X \rightarrow X/G$ is one of the coverings of our family (5) and we have a factorisation as in \eqref{factorization}.

In order to prove that the factorization holds, first at all we have to check that $G$ is isomorphic to a subgroup of $\tilde{G}$. 
One easily checks that the following map $$i: Q_8 \rightarrow (\mathbb{Z}/4\mathbb{Z} \times \mathbb{Z}/2\mathbb{Z}) \rtimes (\mathbb{Z}/2\mathbb{Z})$$
 \begin{equation*}
g_1 \mapsto y_2 y_3, \; g_2 \mapsto y_2 y_1, \; g_3 \mapsto y_3 ^2.
\end{equation*}
yields an injective homomorphism that identifies $G=Q_8$ with a proper subgroup of $(\mathbb{Z}/4\mathbb{Z} \times \mathbb{Z}/2\mathbb{Z}) \rtimes (\mathbb{Z}/2\mathbb{Z})$.

As before, to conclude that the two families are in fact the same one, we have to study their stabilizer subgroups. Looking at the epimorphism $\tilde{\theta} : \Gamma_{0,4} \rightarrow \tilde{G}$ we see that the stabilizer subgroup associated to the fourth branch point $q_4$ is the normal subgroup $K:= \langle y_3 ^3 \rangle = \lbrace 1, y_3, y_3 ^2, y_3 ^3 \rbrace$.The subgroup $H = \lbrace 1, y_3^2 \rbrace$ of $G$ is clearly contained in  $ K= \langle y_3 ^3 \rangle$. This implies that points in $\psi ^{-1}(q_4) = \lbrace p_1, p_2, p_3, p_4 \rbrace$ are critical points for the action of $G$ as well. Up to a permutation of the $p_i$'s, we see that $\tilde{G}$ acts this way on the fiber:
\begin{equation*}\begin{aligned}
&p_2 = y_1 (p_1)= y_1 y_3 (p_1) = y_1 y_3 ^2( p_1) = y_1 y_3 ^3 (p_1),\\
&p_3 = y_2 (p_1) = y_2 y_3 (p_1) = y_2 y_3 ^2 (p_1) = y_2 y_3 ^3 (p_1),\\
&p_4 = y_2 y_1 (p_1) = y_2 y_1 y_3 (p_1) = y_2 y_1 y_3 ^2( p_1) = y_2 y_1 y_3 ^3 (p_1).\\
\end{aligned}\end{equation*}
If we consider the action of $G$ we get:
\begin{equation*}\begin{aligned}
&p_2 = y_1 y_3 (p_1) = i(g_2^{-1} g_1)(p_1) = y_1 y_3 ^3(p_1) = i(g_1^{-1}g_2)(p_1), \\
&p_3 = y_2 y_3 (p_1) = i(g_1)(p_1)= y_2 y_3 ^3(p_1) = i(g_1^{-1})(p_1), \\
&p_4= y_2 y_1 (p_1) = i(g_2)(p_1)= y_2 y_1 y_3 ^2 (p_1)= i(g_2g_3)(p_1).\\
\end{aligned}\end{equation*}
So $p_1,p_2,p_3,p_4$ are 4 ramification points that map to the same critical value for the map $\phi$ and they have multiplicity 2.  To conclude we have to prove that these are all the critical points of $\phi$.  But this is actually true, because none of the stabilizer subgroups of the critical points of $\psi$ that are mapped to the first three critical values includes any subgroup of $G$.
Concluding, by the unicity argument \ref{say_HWMoves}, the special variety given by the family $(5)$ gives the same special variety obtained as the family $(34)$ of Galois coverings of $\mathbb{P}^1$ via $(\mathbb{Z}/4\mathbb{Z} \times \mathbb{Z}/2\mathbb{Z}) \rtimes (\mathbb{Z}/2\mathbb{Z})$ found in \cite{fgp}.  This family is not contained in the hyperelliptic locus (see the proof of theorem 5.3 of \cite{fgp}).

\end{say}

Finally we prove Theorem \ref{xiaointro} that we restate here as follows:

\begin{teo}
\label{xiao}

 If $g'\geq 1$ and we have a positive dimensional family of Galois coverings $f:C \ra C'$ with $g'= g(C')$ and $g = g(C)$ which satisfies condition  $(*)$, then $g \leq 6g'+1$.  
 
In particular, for $g \geq 8 \, ({\rm resp.}\, 14)$ there do not exist positive dimensional  families of Galois coverings with $g' =1 \,  ({\rm resp.} \, 2)$ and which satisfy condition $(*)$. 
\end{teo}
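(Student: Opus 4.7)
The plan is to pass to a non-isotrivial one-parameter subfamily of $\mathsf{M}(\mm,G,\theta)$ on which the ``new'' part of the Hodge bundle is locally constant, and then play the resulting lower bound for the relative irregularity against Xiao's inequality. This is essentially the reverse of the construction used by Pirola in \cite{pietroxiao} to produce his counterexample to Xiao's conjecture.

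First I will exploit the PEL structure. By Theorem \ref{criterio}, condition $(*)$ implies that $\zgm\subset\ag$ is a PEL Shimura subvariety of dimension $3g'-3+r$. The orthogonal $G$-decomposition $H^1(C,\QQ) = H^1(C,\QQ)^G \oplus H^1(C,\QQ)_{\mathrm{new}}$ induces a product decomposition of the ambient PEL Shimura as $\mathsf{A}_{g'}\times\mathsf{Z}_W$, where $\mathsf{Z}_W$ is the smaller PEL Shimura variety attached to the new part. A dimension count via $(*)$ then shows that the natural projection $\pi\colon\zgm\to\mathsf{Z}_W$ has positive-dimensional fibers, each dominating the Jacobian locus inside $\mathsf{A}_{g'}$.

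Next I will choose the right one-parameter family. Take a smooth complete curve $B\subset\mathsf{M}(\mm,G,\theta)$ lying in a single fiber of $\pi$ whose image in $\mathsf{A}_{g'}$ is still positive-dimensional; such $B$ exists by the previous step. Restricting the universal family to $B$ gives a non-isotrivial fibration $f\colon S\to B$ of smooth genus-$g$ curves with fiberwise $G$-action, non-isotriviality coming from the fact that the quotient family $S/G\to B$ moves in $\mathsf{A}_{g'}$. Since $B$ lies in a single fiber of $\pi$, the local system on $B$ underlying the new part of $R^1 f_*\QQ$ has trivial monodromy, hence the new part $W$ of $f_*\omega_{S/B}$ is isomorphic to $\mathcal{O}_B^{g-g'}$. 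Therefore $W|_B$ is contained in the maximal trivial summand of $f_*\omega_{S/B}$, so the relative irregularity satisfies
\begin{equation*}
q_f := q(S) - g(B) \;\geq\; g - g'.
\end{equation*}

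The proof is then completed by Xiao's inequality \cite{xiao}: for any non-isotrivial fibration of a surface on a curve with fibers of genus $g$, one has $q_f\leq (5g+1)/6$. Combining gives $6(g-g')\leq 5g+1$, i.e.\ $g\leq 6g'+1$. The consequences for $g'=1$ ($g\leq 7$, excluding $g\geq 8$) and $g'=2$ ($g\leq 13$, excluding $g\geq 14$) are then immediate. The main obstacle is to justify the product decomposition of the ambient PEL Shimura and the existence of $B$ with the stated properties: both rest on the maximality imposed by $(*)$, along the lines of Pirola's explicit computation for family $(6)$ in \cite{pietroxiao}.
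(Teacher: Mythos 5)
Your overall strategy coincides with the paper's: produce a non-isotrivial one-parameter family of curves in $\mathsf{M}(\mm,G,\theta)$ along which the Prym (new) part of the Jacobian is constant, deduce $q_f=q(S)-g(B)\geq g-g'$, and conclude with Xiao's bound $q_f\leq (5g+1)/6$. Where you differ is in the justification of the key intermediate step, namely that the locus with fixed Prym is positive dimensional. The paper introduces the map $\Psi\colon \mathsf{M}\to\mathsf{A}_{g-g'}(\Theta)$ sending a curve to its (polarized) Prym, identifies the codifferential with the multiplication map $(S^2H^0(K_C)^-)^G\to H^0(2K_C)^G$, and uses $(\ast)$ (which says the full multiplication map $(S^2H^0(K_C))^G\to H^0(2K_C)^G$ is an isomorphism) to conclude $d\Psi$ is surjective, so that the general fiber has dimension exactly $\dim S^2H^0(K_{C'})=g'(g'+1)/2\geq 1$. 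You instead invoke the product decomposition of the ambient PEL Shimura variety into the factor $\mathsf{A}_{g'}$ attached to the $G$-invariants and a factor attached to the new part, together with the fact (Theorem \ref{criterio}) that $\zgm$ fills out this PEL variety; the fiber count is then the same $g'(g'+1)/2$. The two routes are essentially equivalent (both rest on the decomposition $(S^2H^0(K_C))^G\cong S^2H^0(K_{C'})\oplus(S^2H^{1,0}(W))^G$ and on $(\ast)$), but the paper's is more self-contained infinitesimally, while yours leans on the Shimura formalism already established in Section 3; yours has the small advantage of making the fiber of $\pi$ visibly a copy of $\mathfrak{H}_{g'}$, so positive-dimensionality and the motion of the quotient elliptic/genus-$g'$ curve are immediate.

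One step of yours needs tightening: from ``the new part of $f_*\omega_{S/B}$ is a trivial subbundle of rank $g-g'$'' you cannot directly conclude $q_f\geq g-g'$, because $q_f=h^0(B,(f_*\omega_{S/B})^{\vee})$ and a trivial \emph{subsheaf} of a bundle gives no sections of the dual in general; you need it to split off as a direct summand, which here follows from the theorem of the fixed part (the sub-VHS with trivial monodromy is constant) or from Fujita's decomposition. The paper sidesteps this by arguing topologically: the constant Prym $W$ gives, via Abel--Jacobi and projection, an injection $H^1(W,\QQ)\hookrightarrow H^0(B,R^1h_*\QQ)\cong\operatorname{coker}\bigl(H^1(B,\QQ)\to H^1(S,\QQ)\bigr)$, whence $2(q-b)\geq 2(g-g')$. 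Either fix is standard, but as written your inequality is not yet justified.
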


\proof

The idea of the proof is the following: if such a family exists, with the same method used by Pirola in section 2 of \cite{pietroxiao} one constructs a fibration $S \ra B$ of a surface $S$ on a curve $B$, whose general fibre has genus $g$ and whose relative irregularity is at least $g-g'$. Then we apply  Corollary 3 of \cite{xiao}.

In fact, assume that $\mathsf{M} := \mathsf{M}(\mm, G, \theta)$ is as usual the variety parametrising elements of such a family for a given datum $(\mm, G, \theta)$. Every point $\mathsf{p} \in  \mathsf{M}$ corresponds to an isomorphism class of a  curve $C$ of genus $g$ admitting $G$ as a subgroup of $Aut(C)$, whose quotient $C':= C/G$ has genus $g'\geq 1$ and whose monodromy is given by $\theta$. Denoting by $f: C \ra C'$ the Galois covering, to such a point one can associate the abelian variety $W = J(C)/f^*(J(C'))$, which is isogenous to the Prym variety of the covering.  The abelian variety $W$ has a polarisation $\Theta$ and we denote by $\mathsf{A}{_{g-g'}}(\Theta)$ the moduli space of polarised abelian varieties of dimension $g-g'$ with the given type of polarisation.  Denote by $\Psi: \mathsf{M} \ra \mathsf{A}{_{g-g'}}(\Theta)$ the map associating to $\mathsf{p}$ the polarised variety $[W, \Theta]$. The variety $W$ inherits from $C$ the automorphism group $G$.  The differential of the map $\Psi$ at the point $\mathsf{p} \in \mathsf{M}$ is a map 
$$d\Psi_{\mathsf{p}}: H^1(C, T_C)^G \ra S^2H^{0,1}(W)$$
and its image is contained in $ (S^2H^{0,1}(W))^G$, since this  is the space of infinitesimal deformations of $(W, \Theta)$ that preserve the action of $G$. So if we denote by $\mathsf{P} \subset \mathsf{A}{_{g-g'}}(\Theta)$ the image of $\Psi$ , the tangent space $T_{[W, \Theta]}\mathsf{P}$ of $\mathsf{P}$ at $[W, \theta]$ is contained in $ (S^2H^{0,1}(W))^G$. The dual of the differential gives a map: 
$$d\Psi_{\mathsf{p}}^*: (S^2H^{1,0}(W))^G \ra H^0(C, 2K_C)^G.$$
Observe that $H^0(K_C)= H^0(K_C)^G \oplus H^0(K_C)^-$, where the space of invariants $H^0(K_C)^G \cong H^0(C',K_{C'})$ has dimension $g'$, and the complement $ H^0(K_C)^- \cong H^{1,0}(W)$. Therefore  we have 

  \begin{multline}
    \label{W}
    (S^2H^0(K_C))^G \cong S^2H^0(K_{C'}) \oplus (S^2H^{0}(K_C)^-)^G \cong \\
   \cong S^2H^0(K_{C'}) \oplus (S^2H^{1,0}(W))^G   \end{multline}

The dual of the differential $d\Psi_{\mathsf{p}}^*: (S^2H^{0}(K_C)^-)^G \ra H^0(C, 2K_C)^G$ is given by the multiplication map and since $(S^2H^{0}(K_C)^-)^G \subset (S^2H^{0}(K_C))^G$ and by our assumption $(*)$ the multiplication map 
$$(S^2H^{0}(K_C))^G \ra H^0(C, 2K_C)^G$$ is an isomorphism, we conclude that $d\Psi_{\mathsf{p}}^*$ is injective. Hence $d\Psi_{\mathsf{p}}$ is surjective and $T_{[W, \Theta]}\mathsf{P} = (S^2H^{0,1}(W))^G$.  By \eqref{W} its dimension is equal to $N-\frac{g'(g'+1)}{2}$, where $N =\dim (S^2H^0(K_C))^G$ is the dimension of  $\mathsf{M}$, by our condition $(*)$. So for a general point $[W,\Theta] \in \mathsf{P}$, $\dim\Psi^{-1}(W, \Theta) = \frac{g'(g'+1)}{2} \geq 1$, thus we can find a curve $Y \subset \Psi^{-1}(W, \Theta)$ contained in $\mathsf{M}_g$. 
Denote by $\overline{Y}$ its closure in $\overline{\mathsf{M}}_g$. So we get a family of curves of genus $g$,  $h': S' \ra B'$ such that $\overline{Y}$ is the image of the modular map $B' \ra \overline{\mathsf{M}}_g$, $b' \mapsto [h'^{-1}(b')]$. By resolving singularities and taking pullbacks we get a smooth surface $S$, a smooth curve $B$  and a map $h:S \ra B$. Up to a base change  we can assume that $h$ has a section $\eta$. So if we take the Zariski open subset $U$ of $B$ of points having nonsingular fibres, we can use the section $\eta$ to take the Abel-Jacobi maps $A_{\eta(t)}: C_t \ra J(C_t)$, $t \in U$,  compose them with the projections $J(C_t) \ra W$ and obtain mappings: $\phi_t: C_t \ra W$, $\forall t \in U$.  Using the pull-backs $\phi_t^*: H^1(W, \QQ) \ra H^1(C_t, \QQ)$ we get an injection of $H^1(W, \QQ)$ in $H^0(B, R^1h_*{\QQ})$. By the Leray spectral sequence we  identify $H^0(B, R^1h_*{\QQ})$ with the cockernel of the map $h^*: H^1(B, \QQ) \ra H^1(S, \QQ)$, thus we have 
$$\dim H^1(S, \QQ) - \dim H^1(B, \QQ) \geq \dim H^1(W, \QQ) = 2(g-g').$$
So if we denote by $q = h^0(S, \Omega^1_S)$ and by $b$ the genus of the curve $B$ we have $q-b \geq g-g'$. Since by construction the family is not isotrivial, we can apply Corollary 3 of \cite{xiao}, which says that $q-b \leq \frac{5g+1}{6}$ and so we get  $g - g'\leq q-b \leq \frac{5g+1}{6}$, hence $g\leq 6g' +1$.

Clearly if $g'=1$ this implies $g \leq 7$.

\qed\\

Using the above Theorem, to conclude the proof of Theorem \ref{main} it only remains to show that if $g \leq 7  \, ({\rm resp.}  \, 13)$ and $g'=1 \,  ({\rm resp.} \, 2)$ there does not exist any other family satisfying $(*)$ except for the 6 families described above  and if $g \leq 9$ and $g'>1$ there do not exist families satisfying property $(*)$. To do this we use the \verb|MAGMA| script briefly described below.


\section{Higher Genus}

A slightly modified version of the \verb|MAGMA| script used in \cite{fgp} enables us to check that the families given is Section 3 are the only ones under the following conditions. The covering curve has genus $g \leq 9$ and the quotient is a curve of genus $g' \geq 1$, moreover for the case $g'=2$ we extended the calculation up to $g=13$. By Proposition \eqref{xiao} we know that if $g'=1$ these are all the families satisfying $(*)$. It is not bold to conjecture that these are all also in the case $g'>1$. The \verb|MAGMA| script that we used is available at:

  \verb|users.mat.unimi.it/users/penegini/|

  \verb|publications/PossGruppigFix_Elliptic_v2.m|

  \smallskip    

This script differs from the one in \cite{fgp} essentially for the fact that it does not return a representative up to Hurwitz equivalence of a datum. But it gives all possible ramification data. This is because the Hurwitz's moves for the data we have found could be easily handled by hand as we have seen in \ref{say_HWMoves}. In addition, this helped to speed up the finding-example process as well. The other changes in the script are the obvious ones related to the fact that the genus of the base is not $0$ anymore. 
It is important to notice that the \verb|MAGMA| script works perfectly fine for covering curves of genera $g>9$, we simply did not include other results for time reasons. 
\def\cprime{$'$}


\begin{thebibliography}{10}








\bibitem{birman-braids}
J.~S. Birman.
\newblock {\em Braids, links, and mapping class groups}.
\newblock Princeton University Press, Princeton, N.J., 1974.
\newblock Annals of Mathematics Studies, No. 82.




\bibitem{breuer} T.~Breuer.  \newblock {\em Characters and
    automorphism groups of compact {R}iemann surfaces}, volume 280 of
  {\em London Mathematical Society Lecture Note Series}.  \newblock
  Cambridge University Press, Cambridge, 2000.





\bibitem{baffo-linceo} F.~Catanese, M.~L{\"o}nne, and F.~Perroni.
  \newblock Irreducibility of the space of dihedral covers of the
  projective line of a given numerical type.  \newblock {\em Atti
    Accad. Naz. Lincei Cl. Sci. Fis. Mat. Natur. Rend. Lincei (9)
    Mat. Appl.}, 22(3):291--309, 2011.

\bibitem{chenluzuo}
 K.~ Chen, X.~ Lu, K.~ Zuo \newblock A note on Shimura subvarieties in the hyperelliptic Torelli locus.  arXiv:1504.05380. 


\bibitem{CW} C. Chevalley, A. Weil, \textit{\"Uber das Verhalten der
    Intergrale 1. Gattung bei Automorphismen des
    Funktionenkorpers}. Abhand. Math. Sem. Hamburg {\bf 10} (1934),
  358--361.

\bibitem{cf2} E.~Colombo and P.~Frediani.  \newblock Some results on
  the second {G}aussian map for curves.  \newblock {\em Michigan
    Math. J.}, 58(3):745--758, 2009.

\bibitem{cf1} E.~Colombo and P.~Frediani.  \newblock Siegel metric and
  curvature of the moduli space of curves.  \newblock {\em
    Trans. Amer. Math. Soc.}, 362(3):1231--1246, 2010.

\bibitem{cfg} E.~Colombo, P.~Frediani, and A.~Ghigi.  \newblock On
  totally geodesic submanifolds in the {J}acobian locus.  \newblock {\em  Internat. J. Math. }26 (2015), no. 1, 1550005, 21 pp.
  
  
  
  
  
  
  \bibitem{cpt} E.~Colombo, G.~P. Pirola, and A.~Tortora.  \newblock
  Hodge-{G}aussian maps.  \newblock {\em Ann. Scuola Norm. Sup. Pisa
    Cl. Sci. (4)}, 30(1):125--146, 2001.

\bibitem{dejong-noot} J.~de~Jong and R.~Noot.  \newblock Jacobians
  with complex multiplication.  \newblock In {\em Arithmetic algebraic
    geometry ({T}exel, 1989)}, volume~89 of {\em Progr. Math.}, pages
  177--192. Birkh\"auser Boston, Boston, MA, 1991.

\bibitem{dejong-zhang} J.~de~Jong and S.-W. Zhang.  \newblock Generic
  abelian varieties with real multiplication are not {J}acobians.
  \newblock In {\em Diophantine geometry}, volume~4 of {\em CRM
    Series}, pages 165--172. Ed. Norm., Pisa, 2007.


\bibitem{FK} H.~M. Farkas and I.~Kra.  \newblock {\em Riemann
    surfaces}, volume~71 of {\em Graduate Texts in Mathematics}.
  \newblock Springer-Verlag, New York, second edition, 1992.

\bibitem{fgp} P. Frediani, A. Ghigi, M. Penegini  \newblock {\em  Shimura varieties in the Torelli locus via Galois coverings},   International Mathematics Research Notices, volume~20, pages. 10595--10623, (2015). 




\bibitem{gavino} G.~Gonz{\'a}lez~D{\'{\i}}ez and W.~J. Harvey.
  \newblock Moduli of {R}iemann surfaces with symmetry.  \newblock In
  {\em Discrete groups and geometry ({B}irmingham, 1991)}, volume 173
  of {\em London Math. Soc. Lecture Note Ser.}, pages
  75--93. Cambridge Univ. Press, Cambridge, 1992.

\bibitem{grushevsky-moeller-prep} S.~Grushevsky and M.~Moeller.
  \newblock Shimura curves within the locus of genus 3 hyperelliptic
  curves.  \newblock {\tt arXiv:1308.5155 [math.AG]}, 2013.

\bibitem{hain} R.~Hain.  \newblock Locally symmetric families of
  curves and {J}acobians.  \newblock In {\em Moduli of curves and
    abelian varieties}, Aspects Math., E33, pages
  91--108. Friedr. Vieweg, Braunschweig, 1999.

\bibitem{Ha71} W.~J. Harvey.  \newblock On branch loci in
  {T}eichm\"uller space.  \newblock {\em Trans. Amer. Math. Soc.},
  153:387--399, 1971.

\bibitem{kempf-abelian-theta} G.~R. Kempf.  \newblock {\em Complex
    abelian varieties and theta functions}.  \newblock
  Universitext. Springer-Verlag, Berlin, 1991.









\bibitem{kurikuri} I.~Kuribayashi and A.~Kuribayashi.  \newblock
  Automorphism groups of compact {R}iemann surfaces of genera three
  and four.  \newblock {\em J. Pure Appl. Algebra}, 65(3):277--292,
  1990.

\bibitem{liu-yau-ecc} K.~Liu, X.~Sun, X.~Yang, and S.-T. Yau.
  \newblock Curvatures of moduli spaces of curves and applications.
  \newblock {\tt arXiv:1312.6932 [math.DG]}, 2013.  \newblock
  Preprint.

\bibitem{lu-zuo-Mumford-prep} X.~Lu and K.~Zuo.  \newblock The Oort conjecture on Shimura curves in the Torelli locus of curves \newblock {\tt
     	arXiv:1405.4751 [math.AG]}, 2014, \newblock to appear in Compositio Mathematica.

\bibitem{Macl74} 
C. ~Maclachlan. \newblock Modular groups and ber spaces over Teichm\"uller spaces. \newblock {\tt Discontin. Groups Riemann
Surf.}, \newblock  Proc. 1973 Conf. Univ. Maryland,  297--314, 1974.

\bibitem{magaard-e-soci} K.~Magaard, T.~Shaska, S.~Shpectorov, and
  H.~V{\"o}lklein.  \newblock The locus of curves with prescribed
  automorphism group.  \newblock {\em S\=urikaisekikenky\=usho
    K\=oky\=uroku}, (1267):112--141, 2002.  \newblock Communications
  in arithmetic fundamental groups (Kyoto, 1999/2001).

\bibitem{MA} MAGMA Database of Small Groups;
  \verb|http://magma.maths.usyd.edu.au/magma/|
  \verb|htmlhelp/text404.htm|.

\bibitem{M95} R.~Miranda.  \newblock {\em Algebraic curves and
    Mathematics}.  \newblock American Mathematical Society,
 Providence, RI, 1995.

\bibitem{mohazuo} A.~Mohajer, K.~ Zuo.  \newblock On Shimura subvarieties generated by families of abelian covers of $\mathbb{P}^{1}$.  \newblock {\tt arXiv:1402.1900 [math.AG]}, 2014. \newblock Preprint.


\bibitem{moonen-linearity-1} B.~Moonen.  \newblock Linearity
  properties of {S}himura varieties. {I}.  \newblock {\em J. Algebraic
    Geom.}, 7(3):539--567, 1998.

\bibitem{moonen-special} B.~Moonen.  \newblock Special subvarieties
  arising from families of cyclic covers of the projective line.
  \newblock {\em Doc. Math.}, 15:793--819, 2010.


\bibitem{moonen-oort} B.~Moonen and F.~Oort.  \newblock The {T}orelli
  locus and special subvarieties.  \newblock In {\em {H}andbook of
    {M}{oduli: Volume II}}, pages 549--94.  International {P}ress,
  Boston, MA, 2013.

\bibitem{mostow-discontinuous} G.~D. Mostow.  \newblock On
  discontinuous action of monodromy groups on the complex {$n$}-ball.
  \newblock {\em J. Amer. Math. Soc.}, 1(3):555--586, 1988.

\bibitem{oort-can} F.~Oort.  \newblock Canonical liftings and dense
  sets of {CM}-points.  \newblock In {\em Arithmetic geometry
    ({C}ortona, 1994)}, Sympos. Math., XXXVII, pages
  228--234. Cambridge Univ. Press, Cambridge, 1997.

\bibitem{oort-moduli} F.~Oort.  \newblock {M}oduli of abelian
  varieties in mixed and in positive characteristic.  \newblock In
  {\em {H}andbook of {M}{oduli: Volume III}}, pages 75--134.
  International {P}ress, Boston, MA, 2013.


\bibitem{matteo2011} M.~Penegini.  \newblock The classification of
  isotrivially fibred surfaces with {$p_g=q=2$}.  \newblock {\em
    Collect. Math.}, 62(3):239--274, 2011.  \newblock With an appendix
  by S{\"o}nke Rollenske.

\bibitem{penegini2013surfaces} M.~Penegini.  \newblock Surfaces
  isogenous to a product of curves, braid groups and mapping class
  groups.    \newblock  {\em in Beauville Surfaces and Groups},  Springer Proceedings in Math. and Stats. 129--148, 2015.



\bibitem{pietroxiao} G.~P. Pirola.  \newblock On a conjecture of Xiao. J. Reine Angew. Math. 431 (1992), 75--89.





\bibitem{rohde} J.~C. Rohde.  \newblock {\em Cyclic coverings,
    {C}alabi-{Y}au manifolds and complex multiplication}, volume 1975
  of {\em Lecture Notes in Mathematics}.  \newblock Springer-Verlag,
  Berlin, 2009.

\bibitem{S} J.-P. Serre.  \newblock {\em Repr\'esentations lin\'eaires
    des groupes finis}.  \newblock Hermann, Paris, revised edition,
  1978.

\bibitem{shimura-purely-transcendental} G.~Shimura.  \newblock On
  purely transcendental fields automorphic functions of several
  variable.  \newblock {\em Osaka J. Math.}, 1(1):1--14, 1964.

\bibitem{Sw} D. Swinarski;
  http://www.math.uga.edu/~davids/ivrg/CWv2.1.txt

\bibitem{toledo} D.~Toledo.  \newblock Nonexistence of certain closed
  complex geodesics in the moduli space of curves.  \newblock {\em
    Pacific J. Math.}, 129(1):187--192, 1987.

\bibitem{xiao} G.~Xiao.  \newblock Fibered Algebraic Surfeces with Low Slope\newblock {\em
    Math. Annalen}, 276:449--466, 1987.




\end{thebibliography}
\end{document}